\newcommand{\smallfrac}[2]{\textstyle\frac{#1}{#2}\displaystyle}
\newcommand{\CC}{\mathcal{C}}
\newcommand{\DD}{\mathcal{D}}
\newcommand{\CChat}{\widehat{\CC}}
\newcommand{\rc}[1]{rc(#1)}
\newcommand{\inv}[1]{{#1}^{rc}}
\newcommand{\ind}[1]{#1^{\not\oplus}}
\newcommand{\indinv}[1]{#1^{\not\oplus rc}}
\newcommand{\gr}[1]{\textup{gr}(#1)}
\newcommand{\upgr}[1]{\overline{\textup{gr}}(#1)}
\newcommand{\logr}[1]{\underline{\textup{gr}}(#1)}
\newcommand{\grrc}[1]{\textup{gr}^{rc}(#1)}
\newcommand{\upgrrc}[1]{\overline{\textup{gr}}^{rc}(#1)}
\newcommand{\logrrc}[1]{\underline{\textup{gr}}^{rc}(#1)}
\newcommand{\sumof}{{\textstyle\bigoplus}}
\newcommand{\Av}[1]{\textup{Av}(#1)}
\newcommand{\Avn}[2]{\textup{Av}_{#1}(#2)}
\newcommand{\geom}[1]{\textup{Geom}(#1)}
\newcommand{\geomn}[2]{\textup{Geom}_{#2}(#1)}
\newcommand{\Geom}[1]{\textup{Geom}\!\left(#1\right)}
\newcommand{\griddings}[1]{\textup{Geom}^\sharp(#1)}
\newcommand{\griddingsn}[2]{\textup{Geom}^\sharp_{#2}(#1)}
\newcounter{i}
\newcommand{\drawpermutation}[3][1]{\begin{tikzpicture}[scale=0.5,baseline=(O.base)]
\setcounter{i}{0}
\foreach \j in {#2} {
\stepcounter{i}
\draw (0.5*#1,\value{i}*#1) -- (#3*#1+0.5*#1,\value{i}*#1);
\draw (\value{i}*#1,0.5*#1) -- (\value{i}*#1,#3*#1+0.5*#1);
\node at (\value{i}*#1, 0) {\footnotesize$\j$};
\draw[fill] (\value{i}*#1, \j*#1) circle (0.2);
}
\node (O) at (#3*0.5*#1,#3*0.5*#1) {};
\end{tikzpicture}}
\newcommand{\drawpattern}[4][1]{\begin{tikzpicture}[scale=0.5,baseline=(O.base)]
\foreach \x in {1,...,#3} {
\draw (0.5*#1,\x*#1) -- (#3*#1+0.5*#1,\x*#1);
\draw (\x*#1,0.5*#1) -- (\x*#1,#3*#1+0.5*#1);
}
\setcounter{i}{0}
\foreach \j in {#2} {
\stepcounter{i}
\node at (\value{i}*#1, 0) {\footnotesize$\j$};
\draw[fill] (\value{i}*#1, \j*#1) circle (0.2);
\foreach \k in {#4} {
\ifnum \j=\k
\draw [thick] (\value{i}*#1, \j*#1) circle (0.4);
\fi
}
}
\node (O) at (#3*0.5*#1,#3*0.5*#1) {};
\end{tikzpicture}}
\theoremstyle{theorem}
\newtheorem{thm}{Theorem}[section]
\newtheorem{prop}[thm]{Proposition}
\newtheorem{cor}[thm]{Corollary}
\newtheorem{conj}[thm]{Conjecture}
\newenvironment{restate}[1]{\theoremstyle{theorem}\newtheorem*{blah}{Theorem \ref{#1}} \begin{blah}}{\end{blah}}
\newenvironment{restate2}[1]{\theoremstyle{theorem}\newtheorem*{blah2}{Theorem \ref{#1}} \begin{blah2}}{\end{blah2}}
\theoremstyle{definition}
\newtheorem{defn}[thm]{Definition}
\begin{document}

\title{On the centrosymmetric permutations in a class}
\author{Justin M. Troyka\thanks{Department of Mathematics,
Dartmouth College, Hanover, NH 03755, United States. \href{mailto:jmtroyka@math.dartmouth.edu}{\texttt{jmtroyka@gmail.com}}.}}

\maketitle

\begin{abstract}
A permutation is centrosymmetric if it is fixed by a half-turn rotation of its diagram. Initially motivated by a question by Alexander Woo, we investigate the question of whether the growth rate of a permutation class equals the growth rate of its even-size centrosymmetric elements. We present various examples where the latter growth rate is strictly less, but we conjecture that the reverse inequality cannot occur. We conjecture that equality holds if the class is sum closed, and we prove this conjecture in the special case where the growth rate is at most $\xi \approx 2.30522$, using results from Pantone and Vatter on growth rates less than $\xi$. We prove one direction of inequality for sum closed classes and for some geometric grid classes. We end with preliminary findings on new kinds of growth-rate thresholds that are a little bit larger than $\xi$.
\end{abstract}

\section{Introduction}

This paper concerns the enumeration of the permutations in a class that are fixed by the reverse--complement transformation. This is the same kind of endeavor carried out by \cite{BHPV} with permutations fixed by a different transformation, namely taking the inverse.

We begin with terms and notation about permutation classes and their growth rates (Section \ref{subsec:terms}); the introduction continues by defining less standard terms and notation on the reverse--complement map and centrosymmetric permutations (Section \ref{subsec:rc}) and providing a summary of the main ideas of the paper (Section \ref{subsec:mainideas}).

\subsection{Permutation classes} \label{subsec:terms}

This subsection is a quick overview of ideas and notation that are standard in permutation patterns. For more background on this topic, see the survey by Vatter \cite{survey}.

The diagram of a permutation $\pi$ of size $n$ is the plot of the points $(i, \pi(i))$ for $i \in [n]$. A permutation $\pi$ \emph{contains} another permutation $\sigma$ (as a pattern) if the diagram of $\sigma$ can be obtained by deleting zero or more points from the diagram of $\pi$, \textit{i.e.}\ if $\pi$ has a subsequence whose entries have the same relative order as the entries of $\sigma$. We say $\pi$ avoids $\sigma$ if $\pi$ does not contain $\sigma$. For instance, for $\pi = 493125876$, the subsequence $9356$ is an occurrence of $\sigma = 4123$, but on the other hand $\pi$ avoids $3142$. See Figure \ref{fig:contain}.
\begin{figure}
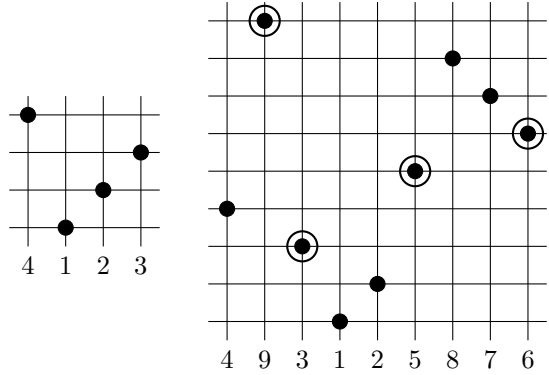

\[ \drawpermutation{4,1,2,3}{4} \hspace{0.25in}
        \drawpattern{4,9,3,1,2,5,8,7,6}{9}{9,3,5,6} \]
    \caption{The permutation $4123$ is contained in the permutation $493125876$.}
    \label{fig:contain}
\end{figure}

The set of permutations (of all sizes) is a poset under pattern containment. A \emph{permutation class} is a down-set in this poset: that is, a set $\CC$ of permutations such that, if $\pi \in \CC$ and $\sigma$ is contained in $\pi$, then $\sigma \in \CC$. For a permutation class $\CC$, we let $\CC_n$ denote the set of size-$n$ permutations in $\CC$. If $R$ is a set of permutations, then $\Av{R}$ (resp.\ $\Avn{n}{R}$) denotes the set of all (resp.\ size-$n$) permutations that avoid every element of $R$. Then $\Av{R}$ is a permutation class, and for every permutation class $\CC$ there is a unique set $R$ such that $\CC = \Av{R}$ and no element of $R$ contains another. This $R$ is called the \emph{basis} of $\CC$.

Given two permutations $\sigma$ and $\tau$ of sizes $a$ and $b$ respectively, their \emph{sum} $\sigma \oplus \tau$ is the permutation of size $a+b$ obtained by juxtaposing the diagrams of $\sigma$ and $\tau$ diagonally: that is, $(\sigma \oplus \tau)(i) = \sigma(i)$ if $1 \le i \le a$, and $(\sigma \oplus \tau)(i) = \tau(i-a)$ if $a+1 \le i \le a+b$. A class $\CC$ is \emph{sum closed} if $\sigma, \tau \in \CC$ implies $\sigma \oplus \tau \in \CC$. Given a set $A$ of permutations, the \emph{sum closure} of $A$, denoted $\textstyle\bigoplus A$, is the smallest sum closed class containing $A$. A permutation is \emph{sum-indecomposable}, or \emph{indecomposable}, if it is not the sum of two permutations of non-zero size. The set of indecomposable permutations in a class $\CC$ is denoted $\ind{\CC}$. The \emph{skew sum} of $\sigma$ and $\tau$, denoted $\sigma \ominus \tau$, is defined similarly, juxtaposing the diagrams anti-diagonally; and likewise for the notions of \emph{skew-sum closed class} and \emph{skew sum--indecomposable permutation}.

The \emph{upper growth rate} of a permutation class $\CC$, denoted $\upgr{\CC}$, is defined as $\limsup_{n\to\infty} |\CC_n|^{1/n}$. The \emph{lower growth rate}, denoted $\logr{\CC}$, is defined as $\liminf_{n\to\infty} |\CC_n|^{1/n}$. If the upper and lower growth rates of $\CC$ are equal, \textit{i.e.}\ if $\lim_{n\to\infty} |\CC_n|^{1/n}$ exists (or is $\infty$), then this number is called the \emph{proper growth rate} of $\CC$, denoted $\gr{\CC}$. We define $\upgr{\ind{\CC}}$, $\logr{\ind{\CC}}$, and $\gr{\ind{\CC}}$ similarly. More generally, for a sequence of non-negative real numbers $a_n$, the upper, lower, and proper growth rates of $a_n$ are defined the same way, respectively denoted $\upgr{a_n}$, $\logr{a_n}$, and $\gr{a_n}$.

By the Marcus--Tardos Theorem (formerly the Stanley--Wilf Conjecture), every permutation class has a finite upper growth rate except the class of all permutations \cite{MarcusTardos}. It is also known that every sum closed (or skew-sum closed) class has a proper growth rate (essentially due to Arratia \cite{Arratia}). Thus, when $\CC$ is assumed to be sum closed, we can write $\gr{\CC}$ for its proper growth rate. It is widely believed that every permutation class has a proper growth rate, but we will refer to the upper or lower growth rate unless we know for sure.

\subsection{The reverse--complement map and centrosymmetry} \label{subsec:rc}

The reverse--complement of a permutation $\pi$, denoted $\rc{\pi}$, is the permutation obtained from $\pi$ by rotating its diagram by a half turn. Equivalently, if $\pi = \pi(1)\ldots\pi(n)$, then the $i$th entry of $\rc{\pi}$ is given by $n+1-\pi(n+1-i)$. This defines a map $rc$ from the set of permutations to itself. The name comes from the fact that it is the composition of the reverse map (horizontal reflection of the diagram) and the complement map (vertical reflection of the diagram); these two maps commute.

The reverse--complement map preserves permutation containment: that is, if $\pi$ contains $\sigma$, then $\rc{\pi}$ contains $\rc{\sigma}$. Consequently, the image of a permutation class $\CC$ under $rc$ is a permutation class, denoted $\rc{\CC}$. Since $rc$ is an involution on the set of permutations, we have $|\rc{\CC_n}| = |\CC_n|$ for all $n$. A class is \emph{$rc$-invariant} if $\rc{\CC} = \CC$. A permutation $\pi$ is \emph{centrosymmetric} if $\rc{\pi} = \pi$. 

We are concerned with the number of centrosymmetric permutations in a class $\CC$. Past research has focused on finding this number for specific classes $\CC$. Egge \cite{Egge} found the number of centrosymmetric permutations in $\Avn{n}{R}$ for every set $R$ of size-$3$ permutations. Lonoff and Ostroff \cite{LO} did the same when $R$ consists of one size-$3$ and one size-$4$ permutation. Egge \cite{Egge2} found an expression for $\Avn{n}{k\ldots1}$ for arbitrary $k$, using the Robinson--Schensted algorithm and evacuation of standard Young tableaux.

\begin{table}[b]
\[ \begin{tabular}{c|c|c|l}
$\CC$ & $|\inv{\CC}_{2k}|$ & $|\inv{\CC}_{2k+1}|$ & $\left( |\inv{\CC}_0|, |\inv{\CC}_1|, \ldots \right)$ \\
\hline
$\Av{321}$ & $\textstyle\binom{2k}{k}$ & $c_k$ & $(1, 1, 2, 1, 6, 2, 20, 5, 70, 14, 252, 42, \ldots)$ \\
$\Av{321, 3412}$ & $f_{2k+2}$ & $f_{2k}$ & $(1, 1, 2, 1, 5, 2, 13, 5, 34, 13, 89, 34, \ldots)$ \\
$\Av{312, 231}$ & $2^k$ & $2^k$ & $(1, 1, 2, 2, 4, 4, 8, 8, 16, 16, 32, 32, \ldots)$ \\
$\Av{321, 312, 231}$ & $f_{k+2}$ & $f_{k+1}$ & $(1, 1, 2, 1, 3, 2, 5, 3, 8, 5, 13, 8, \ldots)$
\end{tabular} \]
\caption{$|\inv{\CC}_{2n}|$ and $|\inv{\CC}_{2n+1}|$ for various $rc$-invariant classes $\CC$, due to Egge \cite{Egge}. We let $f_n$ denote the $n$th Fibonacci number, $c_n$ the $n$th Catalan number. That $\textstyle\left|\Avn{2k}{321}\right| = \binom{2k}{k}$ was first proved by C. K. Fan and J. R. Stembridge, in the setting of fully commutative elements of type $B$; see \cite[Cor.\ 5.6 \& Prop.\ 5.9]{Stembridge}).} \label{table:egge}
\end{table}
The set of centrosymmetric permutations in $\CC$ (resp.\ $\CC_n$) is denoted $\inv{\CC}$ (resp.\ $\inv{\CC}_n$). If $n=2k+1$ is odd, then every centrosymmetric permutation $\pi$ of size $n$ must have an entry in the center column in the center row of the diagram, \textit{i.e.}\ $\pi(k+1) = k+1$. Because of this, the process of enumerating $\inv{\CC}_n$ is typically different between even and odd $n$, with $\inv{\CC}_{2k+1}$ often being obtained in a straightforward way from $\inv{\CC}_{2k}$ or $\CC_k$. As an illustration of this phenomenon, see Table \ref{table:egge}, which lists explicit formulas for $|\inv{\CC}_{2n}|$ and $|\inv{\CC}_{2n+1}|$ for various classes $\CC$, due to Egge \cite{Egge}. In this paper, then, we will be concerned almost exclusively with $\inv{\CC}_{2n}$, and we will see that it is natural to compare $\inv{\CC}_{2n}$ to $\CC_n$.

The notation we have given for growth rates of permutation classes is already established in the literature, but we now introduce analogous notions for the centrosymmetric permutations in a class. The \emph{upper $rc$--growth rate} of a permutation class $\CC$, denoted $\upgrrc{\CC}$, is defined as $\limsup_{n\to\infty} |\inv{\CC}_{2n}|^{1/n}$. The \emph{lower $rc$--growth rate}, denoted $\logrrc{\CC}$, is defined as $\liminf_{n\to\infty} |\inv{\CC}_{2n}|^{1/n}$. These are the upper and lower growth rates of the sequence $|\inv{\CC}_{2n}|$. We let $\indinv{\CC}$ denote the set of indecomposable permutations in $\CC$ that are centrosymmetric, and we define $\upgrrc{\ind{\CC}}$, $\logrrc{\ind{\CC}}$, and $\grrc{\ind{\CC}}$ similarly.

Suppose a class $\DD$ is not $rc$-invariant (meaning $\rc{\DD} \not= \DD$). The class $\DD \cap \rc{\DD}$ is $rc$-invariant, is strictly contained in $\DD$, and includes all the centrosymmetric permutations in $\DD$. Thus, it is natural to consider $\DD \cap \rc{\DD}$ instead of $\DD$, so in this paper we are chiefly concerned with classes that are $rc$-invariant.

\subsection{Conjectures and main theorems} \label{subsec:mainideas}

This paper begins to answer the following question posed by Alexander Woo at the Permutation Patterns Conference in 2016: for which $rc$-invariant permutation classes $\CC$ do we have $\upgrrc{\CC} = \upgr{\CC}$? This question is motivated by algebraic geometry. Billey and Postnikov \cite{BP}, studying the combinatorics of smooth Schubert varieties, define an algebraic notion of pattern containment in any Coxeter group, which has been called \emph{Billey--Postnikov containment} or \emph{BP containment}. BP containment in type $A$ is very similar to classical permutation pattern containment. In type $B$, which corresponds to centrosymmetric permutations, Woo has translated the algebraic definition into a purely combinatorial definition, involving a condition on the pattern occurrence's intersection with its image under $rc$ \cite[Sec.\ 2.5]{Woo}. For some sets of patterns, the number of centrosymmetric permutations BP-avoiding the set is asymptotically the same as the number without this extra condition in the definition. From this observation arises the question at hand.

\begin{table}[t]
\[ \begin{tabular}{c|c|c|l|l}
$R$ & sum closed? & $\text{gr}^{rc} = \text{gr}$? & $\gr{\Av{R}}$ & $\grrc{\Av{R}}$ \\
\hline\hline
$321$ & Yes & Yes & $4$ \hfill \cite{MacMahon} & $4$ \hfill \cite{Stembridge,Egge} \\
$4321$ & Yes & Yes & $9$ \hfill \cite{Regev} & $9$ \hfill \cite{Egge2} \\
$231,312$ & Yes & Yes & $2$ \hfill \cite{SS}  & $2$ \hfill \cite{Egge} \\
$321, 3412$ & Yes & Yes & $\smallfrac{3+\sqrt{5}}{2}$ \hfill \cite{West1996} & $\smallfrac{3+\sqrt{5}}{2}$ \hfill \cite{Egge} \\
$321, 3142$ & Yes & Yes & $\smallfrac{3+\sqrt{5}}{2}$ \hfill \cite{West1996} & $\smallfrac{3+\sqrt{5}}{2}$ \hfill \cite{LO} \\
$321, 231, 312$ & Yes & Yes & $\smallfrac{1+\sqrt{5}}{2}$ \hfill \cite{SS} & $\smallfrac{1+\sqrt{5}}{2}$  \\
$2413, 3142$ & Yes & Yes & $3+2\sqrt{2}$ \hfill \cite{West1995} & $3+2\sqrt{2}$  \\
$4321, 3412$ & Yes & Yes & $4$ \hfill \cite{KS} & $4$ \\
$4321, 3142$ & Yes & Yes & $2 + \sqrt{3}$ \hfill \cite{Vatter2012} & $2 + \sqrt{3}$ \\
\hline $321, 2143$ & No & Yes & $2$ \hfill \cite{BJS} & $2$ \hfill \cite{LO} \\
$3412, 2143$ & No & Yes & $4$ \hfill \cite{Atkinson1998} & $4$  \\
$4231, 1324$ & No & \emph{No} & $2 + \sqrt{2}$ \hfill \cite{AAV} & $2$  \\
$4321, 2143$ & No & \emph{No} & $\smallfrac{3+\sqrt{5}}{2}$ \hfill \cite{AAB} & $2$
\end{tabular} \]
\caption{Examples of $rc$-invariant classes $\Av{R}$ for which the basis $R$ consists of a few short patterns. We indicate whether the class is sum closed and whether $\grrc{\Av{R}} = \gr{\Av{R}}$, and we list the growth rate and the $rc$--growth rate. The ones that are not sum closed are also not skew-sum closed. For both classes $\CC$ for which $\grrc{\CC}$ does not equal $\gr{\CC}$, we have $\grrc{\CC} < \gr{\CC}$. For each growth rate or $rc$--growth rate given, we cite the paper that first gave the exact enumeration, from which the growth rate is easily obtained; the only exception is $\Av{4321}$, for which we cite the growth rate because it was discovered before the exact enumeration. In cases where the $rc$--growth rate does not have a citation next to it, it has not previously been computed; in these cases we obtained $\grrc{\CC}$ by finding an exact enumeration of $\inv{\CC}_{2n}$ using \textit{ad hoc} methods, typically in terms of the previously known enumeration of $\CC_n$.} \label{table:2by4}
\end{table}
We begin by looking at the $rc$--growth rates of $rc$-invariant classes whose basis consists of a few short patterns. In particular, we investigate ``$2 \times 4$ classes'', which are classes of the form $\Av{\sigma, \tau}$ for two permutations $\sigma$ and $\tau$ of size $4$. For the last several years, $2 \times 4$ classes have been a fertile testing ground for enumerative questions about permutation classes. For several $rc$-invariant classes $\CC$ for which $\gr{\CC}$ was previously known, we obtained $\grrc{\CC}$ by finding an exact enumeration of $\inv{\CC}_{2n}$ using \textit{ad hoc} methods. These results are summarized in Table \ref{table:2by4}. In most of the examples we find that $\grrc{\CC} = \gr{\CC}$, but there are two in which $\grrc{\CC} < \gr{\CC}$. Table \ref{table:2by4} justifies the choice to define the $rc$--growth rate with $\inv{\CC}_{2n}$ instead of $\inv{\CC}_n$, and it also leads to the following conjecture:

\begin{conj} \label{conj:main}
For any permutation class $\CC$, $\upgrrc{\CC} \le \upgr{\CC}$.
\end{conj}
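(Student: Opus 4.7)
The plan is to construct a map from $\inv{\CC}_{2n}$ to $\CC_n$ with subexponential fibers; that is, to show $|\inv{\CC}_{2n}| \le f(n) \cdot |\CC_n|$ with $f(n)^{1/n} \to 1$. Taking $n$th roots and $\limsup$ would then give $\upgrrc{\CC} \le \upgr{\CC}$. A preliminary reduction: replacing $\CC$ by $\CC \cap \rc{\CC}$ does not change $\inv{\CC}$ but can only decrease $\upgr{\CC}$, so I assume throughout that $\CC$ is $rc$-invariant.

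The first step is to exhibit a natural map. For $\pi \in \inv{\CC}_{2n}$ let $\sigma \in \CC_n$ be the pattern of its left half $\pi(1)\cdots\pi(n)$; centrosymmetry forces the value set $V = \{\pi(i) : i \le n\} \subseteq [2n]$ to pick exactly one element from each pair $\{k,\, 2n+1-k\}$ for $k \in [n]$, giving $2^n$ possible $V$'s. The pair $(\sigma, V)$ determines $\pi_L$, and then $\pi_R$ is forced by rotational symmetry. This yields the weak bound $|\inv{\CC}_{2n}| \le 2^n |\CC_n|$, and therefore only $\upgrrc{\CC} \le 2 \upgr{\CC}$.

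The heart of the proof is sharpening this. For a fixed $\sigma \in \CC_n$ I would call a choice $V$ \emph{valid} if the resulting centrosymmetric permutation lies in $\CC$, and seek an upper bound on the number of valid $V$'s of the form $f(n)$ with $f(n)^{1/n} \to 1$. The idea: each binary choice ``$k$ versus $2n+1-k$'' in $V$ controls the relative order between certain entries of $\pi_L$ and their partners in $\pi_R$, and this interaction across the two halves should produce an occurrence of any fixed basis element $\beta$ of $\CC$ for a growing fraction of the $2^n$ sign choices. One would iterate over the pairs $k$, tracking the number of sign patterns consistent with avoiding $\beta$, analogously to how Marcus--Tardos-style arguments bound the number of consistent extensions of a partial configuration.

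The main obstacle, as the conjectural status of the statement suggests, is making this argument uniform across all $\sigma$ and all classes $\CC$. A generic $\sigma \in \CC_n$ offers little structural handle on which sign choices force a forbidden pattern to span the two halves, and although a uniformly random sign choice should heuristically contain a forbidden pattern with probability exponentially close to $1$, turning this heuristic into a worst-case bound seems to demand strong structural input. I would therefore attack the conjecture first in structured settings: sum-closed classes (extending the paper's own argument past the $\xi$ threshold via deeper sum decompositions), geometric grid classes (where the grid skeleton gives an explicit encoding compatible with $rc$), and Catalan-style classes such as $\Av{k\cdots 1}$ (where RSK together with evacuation already enumerates the centrosymmetric elements). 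A genuinely general proof would likely require a new ingredient, such as an entropy or probabilistic argument averaging over $\sigma$, or a compactness/well-quasi-ordering reduction of arbitrary classes of bounded growth rate to the structured ones.
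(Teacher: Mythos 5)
This statement is a conjecture in the paper, not a theorem: the paper does not prove it, so there is no proof to compare against, and your write-up is a research plan rather than a proof. Your preliminary observations are correct and in fact coincide with what the paper already establishes. The reduction to $rc$-invariant classes via $\CC \cap \rc{\CC}$ is sound (since $\inv{\CC} = \inv{(\CC \cap \rc{\CC})}$ and passing to a subclass only lowers the upper growth rate). Your encoding of $\pi \in \inv{\CC}_{2n}$ by the pair $(\sigma, V)$, with $\sigma$ the pattern of the left half and $V$ the value set, giving $|\inv{\CC}_{2n}| \le 2^n |\CC_n|$, is exactly Proposition \ref{prop:2factor}(b). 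So everything you actually prove, the paper also proves; the bound $\upgrrc{\CC} \le 2\,\upgr{\CC}$ is as far as either of you gets in full generality.

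The gap you identify is the real one. To get $\upgrrc{\CC} \le \upgr{\CC}$ you would need the number of valid sign vectors $V$ for a fixed $\sigma$ to be subexponential, and you rightly note there is no uniform handle on which sign choices force a basis element of $\CC$ to appear spanning the two halves. The paper's counterexample-flavored evidence (e.g.\ the $X$-class, where $|\inv{\CC}_{2n}| = 2^n$ exactly) shows that the factor $2^n$ can essentially be attained in the left/right encoding, so the subexponentiality of valid $V$'s cannot be a purely combinatorial counting fact about centrosymmetric gridded structures; it has to use the interaction with the basis in a serious way. Your fallback of attacking structured cases first is also exactly the paper's program: Theorem \ref{thm:monotone} handles $\Av{k\cdots 1}$ via Egge's RSK/evacuation formula plus a Cauchy--Schwarz-type inequality, Theorem \ref{thm:geometric} handles forest-cell-graph geometric grid classes, and Theorems \ref{thm:onedirection} and \ref{main-sumclosed} handle sum closed classes (the latter only up to $\gr{\CC} \le \xi$, using Pantone--Vatter's boundedness of $|\ind{\CC}_n|$). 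Note that even the sum closed case remains open above $\xi$ (Conjecture \ref{conj:mainsumclosed}), so ``extending the paper's own argument past $\xi$'' is not a known path but an open problem; the missing ingredient there is precisely a bound on $\upgrrc{\ind{\CC}}$ in terms of $\gr{\CC}$ when the indecomposables are not bounded. In short, no flaw in what you wrote, but also no proof: you have correctly reproduced the known partial bound and correctly located the obstruction.
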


Theorems \ref{thm:geometric} and \ref{thm:onedirection} give hypotheses under which the other direction of inequality holds. The terms used in Theorem \ref{thm:geometric} are defined in Section \ref{sec:geometric}.

\begin{thm}
\label{thm:geometric} If $\CC$ is an $rc$-invariant geometric grid class whose cell graph is a forest, then $\logrrc{\CC} \ge \gr{\CC}$.
\end{thm}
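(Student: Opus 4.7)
The plan is to show $|\inv{\CC}_{2n}| \ge c \, |\CC_n|$ for some positive constant $c$, which, combined with the existence of a proper growth rate for $\CC$ (geometric grid classes have rational generating functions, so $\logr{\CC} = \gr{\CC}$), yields $\logrrc{\CC} \ge \gr{\CC}$. I would use the cell--word encoding of $\CC = \Geom{M}$: each $\pi \in \CC$ is associated with one or more words $w \in \Sigma^*$ over the alphabet $\Sigma$ of nonzero cells of $M$, where $w_i$ records the cell occupied by the $i$-th entry of some fixed gridding of $\pi$. For geometric grid classes whose cell graph is a forest, this encoding has bounded pre-image multiplicity per permutation, and its image is a regular language whose growth rate equals $\gr{\CC}$.

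First, I would arrange that the defining matrix $M$ is $rc$-invariant as a matrix. Let $M^{rc}$ denote the matrix obtained from $M$ by a half-turn rotation together with a sign flip of every nonzero entry; $rc$-invariance of $\CC$ gives $\Geom{M^{rc}} = \Geom{M}$. By refining $M$ (which leaves both the class and the forest property intact) and possibly gluing a copy of $M$ against $M^{rc}$, I may assume $M = M^{rc}$ at the matrix level. Under this assumption there is a cell--level involution $c \mapsto \bar c$ on $\Sigma$ satisfying $w(\rc{\pi}) = \overline{w(\pi)}^{\,R}$, where $\overline{\cdot}$ is letterwise application of the involution and $R$ is word reversal.

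Given this setup, I would produce centrosymmetric permutations by mapping each $\pi \in \CC_n$ to the gridded permutation $\phi(\pi)$ encoded by the concatenation $w(\pi) \cdot \overline{w(\pi)}^{\,R}$, a word of length $2n$ that is automatically $rc$-symmetric at the word level. Three facts then need verification: (a) this concatenation actually encodes a valid gridded permutation, so that $\phi(\pi) \in \CC$; (b) $\phi(\pi)$ is centrosymmetric, which follows from the compatibility of $\bar\cdot$ with $rc$ established above together with the palindrome structure of the word; and (c) $\phi$ is injective up to bounded multiplicity, which follows from the bounded multiplicity of the cell--word encoding for forest cell graphs.

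The main obstacle is (a). The concatenation $w \cdot \bar w^R$ may violate gridding compatibility at the central seam if the last letter of $w$ and the first letter of $\bar w^R$ force incompatible orderings in a shared row or column of $M$. This is precisely where the forest hypothesis is essential: when the cell graph is a forest, the allowable cell transitions form a tree-structured constraint that can be repaired by modifying only boundedly many letters of $w$ at the seam, or alternatively by restricting to the positive--proportion subfamily of words that end in a ``self-compatible'' cell (e.g.\ a cell fixed by $\bar\cdot$, or a letter whose transition to its own image is admissible). A careful implementation of this seam argument, combined with (b) and (c), gives the required inequality $|\inv{\CC}_{2n}| \ge c \, |\CC_n|$ and hence the theorem.
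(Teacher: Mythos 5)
Your proposal aims at the same style of conclusion, but it hand-waves past the one fact that actually makes the construction work, and at one point makes an invalid reduction.

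First, the reduction ``possibly gluing a copy of $M$ against $M^{rc}$'' is not a reduction at all: pasting $M^{rc}$ next to $M$ changes the class $\Geom{M}$. Refining by $M^{\times 2}$ is fine (it does not change the class and preserves the forest property), but refinement alone does not turn an arbitrary $M$ into a centrosymmetric matrix, so the passage from ``$\CC$ is $rc$-invariant'' to ``$M=M^{rc}$'' is left unjustified.

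Second, and more substantively, your points (a), (b), and the ``seam repair'' all implicitly rely on a structural lemma that you never isolate or prove: \emph{when $A$ is a centrosymmetric matrix with an even number of rows and columns whose cell graph $G$ is a forest, $rc$ sends every connected component of $G$ to a distinct component.} That statement is the crux of the paper's argument (shown by observing that an $rc$-fixed tree would contain an $rc$-fixed center vertex or edge, impossible when the dimensions are even). It is also what you would need to choose a consistent orientation of $\Sigma$ so that $w(\rc\pi)=\overline{w(\pi)}^{R}$ holds: if some component were $rc$-fixed, the orientation constraints within that component would conflict with the constraint imposed by $rc$, and your cell-level involution would not commute with the encoding. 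Once the components pair up, the non-zero cells of $A$ split into $X$ and $Y=\rc{X}$ with no shared row or column, and the drawings on $X$ and on $Y$ interleave in exactly one way; at that point there is no seam to repair at all, and the concatenated word automatically encodes a centrosymmetric gridded permutation. Your proposed fallbacks --- patching boundedly many letters at the seam, or restricting to words ending in a ``self-compatible'' cell --- are not shown to yield a positive-proportion injection, and I do not see how to make either precise without first establishing the pairing of components. The paper sidesteps the word-encoding machinery entirely: it works directly with $A$-gridded permutations, observes that each is a pair (an $A_X$-gridded permutation and an $A_Y$-gridded permutation) with unique interleaving, and then takes $\pi\mapsto(\pi,\rc\pi)$ to get a bijection onto the centrosymmetric $A$-gridded permutations of size $2n$. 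That is both cleaner and what would be needed to make your word-level concatenation rigorous.
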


We write $\gr{\CC}$ here because, as shown by Bevan \cite{Bevan}, geometric grid classes have proper growth rates. In Section \ref{sec:geometric} we prove a stronger version of this theorem, as Theorem \ref{thm:geometric2}.

\begin{thm} \label{thm:onedirection}
If $\CC$ is sum closed and $rc$-invariant, then $\logrrc{\CC} \ge \gr{\CC}$.
\end{thm}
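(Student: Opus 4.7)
The plan is to construct an explicit size-doubling injection from $\CC_n$ into $\inv{\CC}_{2n}$. Specifically, consider the map $\phi \colon \CC_n \to \inv{\CC}_{2n}$ defined by
\[ \phi(\pi) = \pi \oplus \rc{\pi}. \]
The key observation is that the reverse--complement of a sum satisfies $\rc{\sigma \oplus \tau} = \rc{\tau} \oplus \rc{\sigma}$, since rotating the diagram of $\sigma \oplus \tau$ by a half turn sends the bottom-left block $\sigma$ to the top-right (as $\rc{\sigma}$) and the top-right block $\tau$ to the bottom-left (as $\rc{\tau}$). Applying this identity gives $\rc{\phi(\pi)} = \rc{\rc{\pi}} \oplus \rc{\pi} = \pi \oplus \rc{\pi} = \phi(\pi)$, so $\phi(\pi)$ is centrosymmetric.

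Next, I would verify that $\phi(\pi) \in \CC$. Since $\CC$ is $rc$-invariant, $\pi \in \CC$ implies $\rc{\pi} \in \CC$; and since $\CC$ is sum closed, $\pi \oplus \rc{\pi} \in \CC$. Combined with the previous paragraph, $\phi(\pi) \in \inv{\CC}_{2n}$. Injectivity of $\phi$ is immediate: from $\pi \oplus \rc{\pi}$, one recovers $\pi$ as the pattern formed by the first $n$ entries (equivalently, the entries with values in $\{1, \ldots, n\}$).

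From the injection we get $|\inv{\CC}_{2n}| \ge |\CC_n|$ for every $n$, so
\[ \logrrc{\CC} = \liminf_{n\to\infty} |\inv{\CC}_{2n}|^{1/n} \ge \liminf_{n\to\infty} |\CC_n|^{1/n} = \logr{\CC}. \]
Finally, since $\CC$ is sum closed, its proper growth rate exists (as noted in Section \ref{subsec:terms}, via Arratia), so $\logr{\CC} = \gr{\CC}$, and the inequality $\logrrc{\CC} \ge \gr{\CC}$ follows.

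There is essentially no hard step here: the only substantive content is the identity $\rc{\sigma \oplus \tau} = \rc{\tau} \oplus \rc{\sigma}$ and the observation that this makes $\pi \oplus \rc{\pi}$ automatically centrosymmetric, so that sum closure plus $rc$-invariance suffices to keep it in $\CC$. The fact that the map doubles size is exactly why the definition of $\logrrc{\CC}$ uses the $n$th root of $|\inv{\CC}_{2n}|$ rather than of $|\inv{\CC}_n|$; this is the reason the resulting comparison is with $\gr{\CC}$ rather than with some square root.
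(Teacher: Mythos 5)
Your proposal is correct and essentially identical to the paper's proof: the paper defines $\rho = \rc{\sigma} \oplus \sigma$ while you use $\pi \oplus \rc{\pi}$, but this is only a cosmetic reordering of the summands, and the verification of centrosymmetry via $\rc{\alpha \oplus \beta} = \rc{\beta} \oplus \rc{\alpha}$ and the resulting cardinality bound are the same.
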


(This result also applies to skew-sum closed classes, as do all our results on sum closed classes, because the reverse of a centrosymmetric permutation is centrosymmetric.) We write $\gr{\CC}$ in this theorem because $\CC$, being sum closed, has a proper growth rate. In Section \ref{sec:centrosymmetric-sumclosed} we prove stronger results that imply Theorem \ref{thm:onedirection}, but we provide a quick proof now.

\begin{proof}[Proof of Theorem \ref{thm:onedirection}]
We define an injection $\CC_n \to \inv{\CC}_{2n}$ as follows: given $\sigma \in \CC_n$, define $\rho = \rc{\sigma} \oplus \sigma$. Since $\CC$ is $rc$-invariant, $\rc{\sigma} \in \CC_n$; then, since $\CC$ is sum closed, $\rho = \rc{\sigma} \oplus \sigma \in \CC_{2n}$; and
\[ \rc{\rho} = \rc{\sigma} \oplus \rc{\rc{\sigma}} = \rc{\sigma} \oplus \sigma = \rho, \]
so $\rho \in \inv{\CC}_{2n}$. This injection shows that $|\CC_n| \le |\inv{\CC}_{2n}|$, which proves the desired inequality on the growth rates.
\end{proof}

As a result of this theorem, Conjecture \ref{conj:main} would imply another conjecture:

\begin{conj} \label{conj:mainsumclosed}
If $\CC$ is sum closed and $rc$-invariant, then $\grrc{\CC}$ exists and $\grrc{\CC} = \gr{\CC}$.
\end{conj}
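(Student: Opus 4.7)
The plan is a two-sided sandwich combining the already-established lower bound with the conjectured upper bound. Theorem~\ref{thm:onedirection} gives $\logrrc{\CC} \ge \gr{\CC}$ whenever $\CC$ is sum closed and $rc$-invariant, and Conjecture~\ref{conj:main} would supply the matching $\upgrrc{\CC} \le \upgr{\CC}$. Because $\CC$ is sum closed, $\gr{\CC}$ is a proper growth rate, so $\upgr{\CC} = \gr{\CC}$. Combining these with the trivial inequality $\logrrc{\CC} \le \upgrrc{\CC}$ produces the chain
\[ \gr{\CC} \;\le\; \logrrc{\CC} \;\le\; \upgrrc{\CC} \;\le\; \upgr{\CC} \;=\; \gr{\CC}, \]
which forces equality throughout. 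In particular the limit $\lim_{n\to\infty}|\inv{\CC}_{2n}|^{1/n}$ exists and equals $\gr{\CC}$, which is precisely the content of the conjecture.

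\textbf{The main obstacle.} Conditional on Conjecture~\ref{conj:main}, the argument above is essentially immediate; the real work lies in removing the conditionality, i.e.\ proving directly that $\upgrrc{\CC} \le \gr{\CC}$ when $\CC$ is sum closed and $rc$-invariant. A natural attempt is to encode each $\pi \in \inv{\CC}_{2n}$ by data of size comparable to $\log|\CC_n|$: the left half $\pi_L := \pi(1)\pi(2)\cdots\pi(n)$ gives (after pattern-normalization) an element of $\CC_n$, and $\pi$ is determined by $\pi_L$ together with the interleaving data between $\pi_L$ and $\rc{\pi_L}$. If the number of legal interleavings could be shown to be subexponential in $n$, this would give $|\inv{\CC}_{2n}| \le |\CC_n| \cdot s(n)$ for some subexponential $s$, hence $\upgrrc{\CC} \le \gr{\CC}$.

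The sticking point is the \emph{indecomposable} case. When $\pi \in \inv{\CC}_{2n}$ decomposes as $\pi_L \oplus \pi_R$ with $\pi_R = \rc{\pi_L}$, the encoding is essentially free and reproduces the injection used in Theorem~\ref{thm:onedirection}; but when $\pi$ is indecomposable, entries of the two halves can interleave in many ways that sum closure alone does not constrain. Controlling this seems to demand structural information about $\CC$ beyond sum closure, which is presumably why the paper's unconditional proof of Conjecture~\ref{conj:mainsumclosed} is limited to the range $\gr{\CC} \le \xi$, where the Pantone--Vatter classification of slow-growing classes supplies exactly the missing structure. Extending the unconditional result past $\xi$ would, I expect, either require an analogous structural understanding of sum closed classes at larger growth rates, or a genuinely new combinatorial bound on indecomposable centrosymmetric elements.
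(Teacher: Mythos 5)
Your analysis is correct and matches the paper's own treatment exactly: the statement is left as a conjecture, and the paper gives precisely the chain you describe --- Theorem~\ref{thm:onedirection} supplies $\logrrc{\CC} \ge \gr{\CC}$, Conjecture~\ref{conj:main} would supply $\upgrrc{\CC} \le \gr{\CC}$, and together these would force $\grrc{\CC}$ to exist and equal $\gr{\CC}$ --- with the only unconditional result being the special case $\gr{\CC} \le \xi$ (Theorem~\ref{main-sumclosed}), proved via the Pantone--Vatter bound on $|\ind{\CC}_n|$ and Corollary~\ref{cor:upgrrcind}. You have also correctly located the genuine open difficulty, namely controlling the centrosymmetric indecomposables $\indinv{\CC}_{2n}$ (equivalently showing $\upgrrc{\ind{\CC}} \le \gr{\CC}$), which is exactly the quantity the paper isolates in Proposition~\ref{inequality} and Corollary~\ref{cor:upgrrcind} and which remains unbounded without structural input beyond sum closure.
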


This conjecture is supported by the fact that, although we know several examples of $rc$-invariant $\CC$ where $\grrc{\CC} \not= \gr{\CC}$, none of these examples is sum closed. We have proved Conjecture \ref{conj:mainsumclosed} in the following special case:

\begin{thm} \label{main-sumclosed}
Let $\CC$ be a sum closed $rc$-invariant permutation class, and let $\xi \approx 2.30522$ be the unique positive root of $x^5-2x^4-x^2-x-1$ (as defined in \cite{PanVat}). If $\gr{\CC} \le \xi$, then $\grrc{\CC}$ exists and $\grrc{\CC} = \gr{\CC}$.
\end{thm}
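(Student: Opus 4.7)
Since Theorem~\ref{thm:onedirection} already gives $\logrrc{\CC} \ge \gr{\CC}$, my task reduces to establishing the reverse inequality $\upgrrc{\CC} \le \gr{\CC}$, from which $\grrc{\CC}$ automatically exists and equals $\gr{\CC}$.

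The starting point is a generating-function identity from the sum-decomposition of centrosymmetric permutations. Writing $\pi \in \inv{\CC}_{2n}$ as $\pi_1 \oplus \cdots \oplus \pi_k$ with each $\pi_i$ indecomposable, the $rc$-invariance $\rc{\pi} = \pi$ forces $\pi_i = \rc{\pi_{k+1-i}}$. Splitting by the parity of $k$ — in the odd case, the middle block $\pi_{(k+1)/2}$ must itself be a centrosymmetric indecomposable of even size — yields
\[
\sum_{n \ge 0} |\inv{\CC}_{2n}|\, x^n \;=\; \frac{1 + J(x)}{1 - I(x)},
\]
where $I(x) = \sum_m |\ind{\CC}_m|\, x^m$ and $J(x) = \sum_{t \ge 1} |\indinv{\CC}_{2t}|\, x^t$. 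Since $\CC$ is sum closed, $\sum_n |\CC_n|\, x^n = 1/(1-I(x))$, whose smallest positive singularity is $1/\gr{\CC}$. The singularities of $(1+J(x))/(1-I(x))$ lie among those of $1/(1-I(x))$ and those of $J(x)$, so
\[
\upgrrc{\CC} \;\le\; \max\bigl(\gr{\CC},\ \upgrrc{\ind{\CC}}\bigr),
\]
and the entire task reduces to the single inequality $\upgrrc{\ind{\CC}} \le \gr{\CC}$: the centrosymmetric sum-indecomposables of size $2n$ must grow no faster than $\CC$ itself.

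The hypothesis $\gr{\CC} \le \xi$ enters through the Pantone-Vatter classification \cite{PanVat} of permutation classes with growth rate less than $\xi$. These classes have rational generating functions and admit a uniform combinatorial description — essentially, each is built by a sum-closure construction on a geometric grid class whose cell graph is a forest. Using this description, I plan to encode each centrosymmetric sum-indecomposable of size $2n$ by its left half together with a constant amount of boundary information, obtaining a bound $|\indinv{\CC}_{2t}| \le p(t)\, \gr{\CC}^t$ for some polynomial $p$, which forces $\upgrrc{\ind{\CC}} \le \gr{\CC}$ and completes the proof.

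The main obstacle is this last encoding step: the naive bound $|\indinv{\CC}_{2t}| \le |\CC_{2t}|$ only yields $\upgrrc{\ind{\CC}} \le \gr{\CC}^2$, which is too weak, so the improvement genuinely requires exploiting the Pantone-Vatter structure. Cases in which $\ind{\CC}$ fits inside a forest-shaped geometric grid class should be handled by Theorem~\ref{thm:geometric}; for the remaining cases (those pushing the growth rate close to the threshold $\xi$), I expect to perform a compatibility check between centrosymmetry and the specific merge/staircase constructions used by Pantone-Vatter, using the fact that the $rc$-action permutes the cells of the griddings in a structure-preserving way.
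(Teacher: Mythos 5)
Your reduction is on the right track up to a point: you correctly reduce (via the generating-function identity, the paper's Proposition~\ref{prop:gf} and Proposition~\ref{inequality}) to showing $\upgrrc{\ind{\CC}} \le \gr{\CC}$, and you correctly observe that Theorem~\ref{thm:onedirection} handles the other inequality. But then you go off-road. You write that ``the naive bound $|\indinv{\CC}_{2t}| \le |\CC_{2t}|$ only yields $\upgrrc{\ind{\CC}} \le \gr{\CC}^2$, which is too weak,'' and you therefore propose an ad hoc encoding argument based on a supposed structural description (sum-closures of forest-shaped geometric grid classes) that does not match what Pantone--Vatter actually prove below $\xi$. This is the gap: you are bounding $|\indinv{\CC}_{2t}|$ by the wrong quantity. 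The correct naive bound is $|\indinv{\CC}_{2t}| \le |\ind{\CC}_{2t}|$ (compare to the \emph{indecomposables}, not to all of $\CC$), and the relevant output of Pantone--Vatter is precisely that this right-hand side is \emph{bounded}.

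Concretely, the paper's Proposition~\ref{prop:bounded} (distilled from \cite{PanVat}) shows that $\gr{\CC} \le \xi$ forces $|\ind{\CC}_n| \le 5$ for all $n$. Hence $\upgr{\ind{\CC}} \le 1$, and then Proposition~\ref{prop:2factor}(a) applied to $\ind{\CC}$ gives $\upgrrc{\ind{\CC}} \le \bigl(\upgr{\ind{\CC}}\bigr)^2 \le 1 \le \gr{\CC}$; Corollary~\ref{cor:upgrrcind} then finishes. Note that the ``squared'' loss from the size-$2n$ vs.\ weight-$n$ mismatch, which you rightly flagged as a danger, is harmless exactly because $1^2 = 1$. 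So the missing idea is not a refined encoding exploiting grid-class structure but simply recognizing that the hypothesis $\gr{\CC} \le \xi$ controls $|\ind{\CC}_n|$ uniformly, after which the crude bound suffices. (This is also why the paper's Theorem~\ref{main-sumclosed2} is stated with the weaker hypothesis that $|\ind{\CC}_n|$ is bounded, or even just $\gr{\ind{\CC}} \le 1$: that is the level of generality at which the argument actually runs.)
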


In Sections \ref{sec:centrosymmetric-sumclosed} and \ref{sec:lessthanxi} we prove stronger results that imply Theorem \ref{main-sumclosed}.

The rest of the paper is organized as follows: Section \ref{sec:general-results} gives general results about $rc$--growth rates, including a proof that $\grrc{\Av{k\cdots 1}} = \gr{\Av{k\cdots1}}$. In Section \ref{sec:geometric} we focus on geometric grid classes, presenting examples where $\grrc{\CC} < \gr{\CC}$ and proving Theorem \ref{thm:geometric}. In Section \ref{sec:sumclosed}, we prove theorems on the $rc$--growth rates of sum closed classes, and we give results related to the work of Pantone and Vatter \cite{PanVat} on sum closed classes $\CC$  such that $\gr{\CC} \le \xi \approx 2.30522$. Section \ref{sec:thresholds} presents preliminary findings and open questions involving the threshold of unbounded indecomposables and the threshold of exponential indecomposables.

\section{General results on $rc$--growth rates} \label{sec:general-results}

\subsection{Basic facts} \label{sec:2by4}

\begin{prop} \label{prop:2factor}
(a) If $\CC$ is any class, then $|\inv{\CC}_{2n}| \le |\CC_{2n}|$, and so $\upgrrc{\CC} \le \left(\upgr{\CC}\right)^2$ and $\logrrc{\CC} \le \left(\logr{\CC}\right)^2$. (b) If $\CC$ is any class, then $|\inv{\CC}_{2n}| \le 2^n\,|\CC_n|$, and so $\upgrrc{\CC} \le 2\, \upgr{\CC}$ and $\logrrc{\CC} \le 2\, \logr{\CC}$.
\end{prop}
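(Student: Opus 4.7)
The plan is to prove each part by first establishing the cardinality inequality and then passing to growth rates by taking $n$-th roots and applying $\limsup$ and $\liminf$.

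For part (a), the cardinality bound $|\inv{\CC}_{2n}| \le |\CC_{2n}|$ is immediate from the containment $\inv{\CC}_{2n} \subseteq \CC_{2n}$. Taking $n$-th roots and rewriting,
\[ |\inv{\CC}_{2n}|^{1/n} \le |\CC_{2n}|^{1/n} = \bigl(|\CC_{2n}|^{1/(2n)}\bigr)^2, \]
so the growth rate bounds follow by recognizing $|\CC_{2n}|^{1/(2n)}$ as a subsequence of $|\CC_m|^{1/m}$ and applying $\limsup$ (for the upper bound) and $\liminf$ (for the lower bound) in $n$.

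For part (b), the key observation is that a centrosymmetric permutation $\pi$ of size $2n$ is completely determined by its first half, since the relation $\pi(2n+1-i) = 2n+1-\pi(i)$ forces the second half once the first is given. I would encode $\pi$ by the pair $(\sigma, S)$, where $\sigma$ is the size-$n$ permutation whose pattern matches $\pi(1), \ldots, \pi(n)$ and $S = \{\pi(1), \ldots, \pi(n)\} \subseteq [2n]$ records the values those entries take. For $\pi$ to be a well-defined permutation one needs $S \cap \{2n+1-s : s \in S\} = \emptyset$, equivalently $S$ contains exactly one element from each of the $n$ pairs $\{i, 2n+1-i\}$, yielding $2^n$ admissible choices of $S$. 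Meanwhile, since $\sigma$ appears as a pattern in $\pi \in \CC$, we must have $\sigma \in \CC_n$. The map $\pi \mapsto (\sigma, S)$ is injective, so $|\inv{\CC}_{2n}| \le 2^n \, |\CC_n|$; taking $n$-th roots gives $|\inv{\CC}_{2n}|^{1/n} \le 2 \, |\CC_n|^{1/n}$, and the two growth rate inequalities follow by applying $\limsup$ and $\liminf$.

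I do not expect any serious obstacle; the only piece of real content is the first-half encoding in part (b), together with the verification that the admissible subsets $S$ are precisely the transversals of the $n$ centrosymmetric pairs. Everything else is a routine calculation.
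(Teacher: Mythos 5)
Your proof matches the paper's almost verbatim: part (a) from the containment $\inv{\CC}_{2n} \subseteq \CC_{2n}$, and part (b) by encoding a centrosymmetric $\rho$ of size $2n$ as the pair (pattern of the first $n$ entries, set of values they occupy), observing that centrosymmetry forces the value set to be a transversal of the $n$ pairs $\{i, 2n+1-i\}$. The paper writes $(J,\pi)$ where you write $(\sigma,S)$, but the idea and the injectivity argument are identical.

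One caution on the step you call routine. From $|\inv{\CC}_{2n}| \le |\CC_{2n}|$, the passage to $\upgrrc{\CC} \le (\upgr{\CC})^2$ is indeed a subsequence argument, since the $\limsup$ of a subsequence is at most the $\limsup$ of the full sequence. But the $\liminf$ of a subsequence is at \emph{least} the $\liminf$ of the full sequence, so ``applying $\liminf$'' does not give $\liminf_n |\CC_{2n}|^{1/(2n)} \le \logr{\CC}$ for free; the claimed $\logrrc{\CC} \le (\logr{\CC})^2$ needs a comparison between $|\CC_{2n}|$ and nearby odd-index terms, e.g.\ the standard fact that $|\CC_{m+1}| \le (m+1)\,|\CC_m|$ for every permutation class, which shows the lower growth rate along even indices coincides with the lower growth rate over all indices. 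The same remark applies to the $\logr$ claim in part (b). The paper also suppresses this (it says the growth-rate statements ``follow immediately''), so this is a shared omission rather than a divergence from the paper, but as literally written your subsequence justification is incorrect for the lower bounds.
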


\begin{proof}
Part (a) holds because $\inv{\CC}_{2n} \subseteq \CC_{2n}$. To prove part (b), let $\rho \in \inv{\CC}_{2n}$, and let $J$ be the set of elements of $[2n]$ that occur in the first $n$ entries of $\rho$. Because $\rho$ is centrosymmetric, $j \in J$ if and only if $n+1-j \not\in J$, so there are $2^n$ possible sets $J$. Now let $\pi$ be the permutation formed by the first $n$ entries of $\rho$; then $\pi \in \CC_n$. Thus, for each $\rho \in \inv{\CC}_{2n}$, we obtain a set $J$ and a permutation $\pi \in \CC_n$, and the number of such pairs $(J, \pi)$ is $2^n\,|\CC_n|$. Moreover, the function $\rho \mapsto (J, \pi)$ just described is injective. Therefore, $|\inv{\CC}_{2n}| \le 2^n\,|\CC_n|$.

The corresponding statements about the growth rates now follow immediately.
\end{proof}

Note that the function defined in this proof is not necessarily surjective, because there may be pairs $(J, \pi)$ whose corresponding $\rho$ is not in $\CC$ even though $\pi \in \CC$. For instance, if $\CC = \Av{21}$ and $(J, \pi) = (\{3,4\}, 12)$, then $\pi = 12 \in \Av{21}$ but $\rho = 3412 \not\in \Av{21}$.

\begin{cor}
If $\upgr{\CC}$ is $0$ or $1$, then $\gr{\CC}$ and $\grrc{\CC}$ exist and $\grrc{\CC} = \gr{\CC}$.
\end{cor}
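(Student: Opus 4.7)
The plan is to split into the two cases $\upgr{\CC} = 0$ and $\upgr{\CC} = 1$. The first case is essentially immediate: $\limsup |\CC_n|^{1/n} = 0$ forces $|\CC_n| \to 0$, and since $|\CC_n|$ is a non-negative integer it is eventually zero, so downward closure makes $\CC$ finite. Then $\inv{\CC} \subseteq \CC$ is also finite, so $|\inv{\CC}_{2n}| = 0$ for large $n$ and both $\gr{\CC}$ and $\grrc{\CC}$ exist and equal $0$.

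For the case $\upgr{\CC} = 1$, I would first pin down $\gr{\CC}$. Since $\upgr{\CC} > 0$ the class must be infinite, and a downward-closed infinite set of permutations contains a permutation of every size (delete entries from an arbitrarily large element). Hence $|\CC_n| \ge 1$ for all $n$, so $\logr{\CC} \ge 1$, and combined with the hypothesis $\gr{\CC}$ exists and equals $1$. For the $rc$--growth rate, the upper bound $\upgrrc{\CC} \le 1$ is handed to us directly by Proposition~\ref{prop:2factor}(a), since $\upgrrc{\CC} \le (\upgr{\CC})^2 = 1$.

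The main step is the matching lower bound $\logrrc{\CC} \ge 1$, for which I need at least one centrosymmetric permutation in $\CC$ of every sufficiently large even size. Here I would invoke Erd\H{o}s--Szekeres: every permutation of size $n$ contains a monotone subsequence of length at least $\lceil\sqrt{n}\rceil$. Since $\CC$ contains permutations of arbitrarily large size, one of the two kinds of monotone subsequences arises for infinitely many $n$, so by pigeonhole $\CC$ contains either arbitrarily long identities $12\cdots k$ or arbitrarily long reverse-identities $k\cdots 1$. Downward closure then upgrades this to containing \emph{all} identities or \emph{all} reverse-identities. Both $12\cdots(2n)$ and $(2n)\cdots 1$ are centrosymmetric, so $|\inv{\CC}_{2n}| \ge 1$ for every $n$, giving $\logrrc{\CC} \ge 1$ and hence $\grrc{\CC} = 1 = \gr{\CC}$.

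I do not foresee any real obstacle: the argument is elementary and, unlike Theorem~\ref{thm:onedirection}, requires no sum-closure hypothesis because the witnessing centrosymmetric permutations (the identities or reverse-identities) are already forced into $\CC$ by Erd\H{o}s--Szekeres together with downward closure. The only thing to be careful about is the pigeonhole step, where one must commit to a single choice (increasing or decreasing) that works uniformly in $n$.
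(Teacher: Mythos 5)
Your proof is correct and takes essentially the same route as the paper's: both cases are handled by combining the upper bound $\upgrrc{\CC} \le (\upgr{\CC})^2$ from Proposition~\ref{prop:2factor}(a) with, in the $\gr{\CC}=1$ case, an Erd\H{o}s--Szekeres argument producing arbitrarily long monotone (hence centrosymmetric) permutations in $\CC$. The paper leaves the existence of $\gr{\CC}$ and the pigeonhole step implicit, whereas you spell them out, but there is no substantive difference.
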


\begin{proof}
Assume $\upgr{\CC}$ is $0$ or $1$. Then clearly $\gr{\CC}$ exists. If $\gr{\CC} = 0$, then by Proposition \ref{prop:2factor}(a) we have $\logrrc{\CC} = \upgrrc{\CC} = 0$, and $\grrc{\CC}$ exists.

If $\gr{\CC} = 1$, then $|\CC_n| \ge 1$ for all $n$, so by the Erd\H{o}s--Szekeres Theorem $\CC$ includes the permutation $1\ldots n$ for all $n$ or the permutation $n \ldots 1$ for all $n$; these permutations are centrosymmetric, so $|\inv{\CC}_{2n}| \ge 1$ for all $n$, and so $\logrrc{\CC} \ge 1$. But by Proposition \ref{prop:2factor}(a) we have $\upgrrc{\CC} \le 1$, so in fact $\logrrc{\CC} = \upgrrc{\CC} = 1$, and $\grrc{\CC}$ exists.
\end{proof}

\subsection{Unions of permutation classes} \label{sec:unions}

Let $\DD$ be a class, and let $\CC = \DD \cup \rc{\DD}$. Then $\CC$ and $\DD \cap \rc{\DD}$ are both $rc$-invariant, and $\inv{\CC} = \inv{(\DD \cap \rc{\DD})}$. But if $\DD$ is a proper subclass of $\CC$, then $\DD \cap \rc{\DD}$ is also a proper subclass of $\CC$; in this case, we should expect $\inv{\CC}$ to grow slowly relative to $\CC$, because all the centrosymmetric permutations in $\CC$ are confined to the smaller class $\DD \cap \rc{\DD}$. This expectation is consistent with the fact that, of the three classes $\CC$ of this form that we have checked, all of them satisfy $\grrc{\CC} < \gr{\CC}$, as seen in Table \ref{table:3examples}.

\begin{table}
\[ \begin{tabular}{c|cl|cl}
$\DD$ & $\gr{\CC}$ & cite & $\grrc{\CC}$ [$= \grrc{\DD \cap \rc{\DD}}$] & cite (if previously known) \\
\hline
$\Av{312}$ & $4$ & \cite{Knuth} & $2$ & \cite{Egge} \\
$\Av{4123}$ & $9$ & \cite{Stankova} & $4$ & \\
$\Av{4312}$ & $9$ & \cite{West} & $2+\sqrt{5}$ &
\end{tabular} \]
\caption{Three examples of classes of the form $\CC = \DD \cup \rc{\DD}$ and their $rc$--growth rates. The growth rate of $\CC$ equals the growth rate of $\DD$, which was already known in these examples. The $rc$--growth rate of $\CC$ equals the $rc$--growth rate of $\DD \cap \rc{\DD}$, which we computed using \textit{ad hoc} methods if it was not already known. } \label{table:3examples}
\end{table}

Thus it makes sense to focus our investigation on classes $\CC$ that cannot be written as $\DD \cup \rc{\DD}$ unless $\DD = \CC$, and this motivates a definition:

\begin{defn}
For an $rc$-invariant class $\CC$, let $\CChat$ denote the intersection of all classes $\DD$ such that $\CC = \DD \cup \rc{\DD}$. If $\CChat = \CC$, meaning that $\CC$ cannot be written as $\DD \cup \rc{\DD}$ unless $\DD = \CC$, then we say that $\CC$ is \emph{$rc$-atomic}.
\end{defn}

A class is called \emph{atomic} if it cannot be written as a union of two proper subclasses, so every $rc$-invariant atomic class is $rc$-atomic. Also note that $\CChat$, as an intersection of classes, is a class.

\begin{prop} \label{prop:characterization}
Let $\CC$ be $rc$-invariant.
\begin{itemize}
\item[(a)] Let $\sigma \in \CC$, and for any permutation $\alpha$ let $\CC(\alpha)$ denote the class of permutations in $\CC$ that avoid $\alpha$. The following are equivalent:
\begin{itemize}
\item[(i)] $\sigma$ is in every $\DD$ such that $\CC = \DD \cup \rc{\DD}$ --- that is, $\sigma \in \CChat$;
\item[(ii)] $\CC(\sigma) \cup \CC(\rc{\sigma}) \not= \CC$;
\item[(iii)] There is $\pi \in \CC$ that contains $\sigma$ and $\rc{\sigma}$.\end{itemize}
\item[(b)] $\CC$ is $rc$-atomic if and only if for every $\sigma \in \CC$ there is $\pi \in \CC$ that contains $\sigma$ and $\rc{\sigma}$.
\item[(c)] The centrosymmetric permutations in $\CC$ all lie in $\CChat$ --- that is, $\inv{\CC} = \inv{\CChat}$.
\end{itemize}
\end{prop}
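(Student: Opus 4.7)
My plan is to prove part (a) first as the technical core, then derive (b) and (c) as straightforward consequences.

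For part (a), I would start with the easiest equivalence: (ii) $\Leftrightarrow$ (iii) is essentially a tautology. Indeed, $\CC(\sigma) \cup \CC(\rc{\sigma}) \neq \CC$ means there is some $\pi \in \CC$ that lies outside both $\CC(\sigma)$ and $\CC(\rc{\sigma})$, which unpacks directly to: some $\pi \in \CC$ contains both $\sigma$ and $\rc{\sigma}$. The substantive work is (i) $\Leftrightarrow$ (ii). The central observation I would deploy is that, because $\CC$ is $rc$-invariant, for any pattern $\alpha$ we have $\rc{\CC(\alpha)} = \CC(\rc{\alpha})$; this follows since $\rho \in \rc{\CC(\alpha)}$ iff $\rc{\rho} \in \CC$ and $\rc{\rho}$ avoids $\alpha$, iff $\rho \in \CC$ and $\rho$ avoids $\rc{\alpha}$.

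Using this, for (i) $\Rightarrow$ (ii), I would argue the contrapositive: if $\CC = \CC(\sigma) \cup \CC(\rc{\sigma})$, then setting $\DD = \CC(\sigma)$ gives a decomposition $\CC = \DD \cup \rc{\DD}$ with $\sigma \not\in \DD$, contradicting (i). Conversely, for (ii) $\Rightarrow$ (i), suppose some $\DD$ with $\CC = \DD \cup \rc{\DD}$ satisfies $\sigma \not\in \DD$. Since $\DD$ is a down-set and $\sigma \not\in \DD$, we have $\DD \subseteq \CC(\sigma)$; and since $\sigma \not\in \DD$ forces $\rc{\sigma} \not\in \rc{\DD}$, we also get $\rc{\DD} \subseteq \CC(\rc{\sigma})$. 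Combining these gives $\CC \subseteq \CC(\sigma) \cup \CC(\rc{\sigma})$, violating (ii). The main conceptual point — which is the only mildly subtle step — is recognizing that $\DD = \CC(\sigma)$ is the canonical ``largest'' witness class that omits $\sigma$, and that $rc$-invariance of $\CC$ lets us swap $rc$ past the avoidance operator.

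Part (b) is immediate from part (a): $\CC$ is $rc$-atomic iff $\CChat = \CC$, iff every $\sigma \in \CC$ satisfies condition (i), iff by (a) every $\sigma \in \CC$ satisfies condition (iii). Part (c) is even shorter: if $\sigma \in \inv{\CC}$, then $\rc{\sigma} = \sigma$, so $\sigma$ itself contains both $\sigma$ and $\rc{\sigma}$, witnessing (a)(iii) with $\pi = \sigma$. Hence $\sigma \in \CChat$, giving $\inv{\CC} \subseteq \inv{\CChat}$; the reverse inclusion is automatic from $\CChat \subseteq \CC$.

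I do not anticipate any serious obstacles here — the proposition is really a structural unpacking of what ``in every decomposition'' means, and once one notes the identity $\rc{\CC(\alpha)} = \CC(\rc{\alpha})$ for $rc$-invariant $\CC$, everything falls into place.
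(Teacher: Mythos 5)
Your proof is correct and relies on the same core facts as the paper's (the down-set property of $\DD$ and the identity $\rc{\CC(\alpha)} = \CC(\rc{\alpha})$ for $rc$-invariant $\CC$); the only differences are organizational --- you prove (ii)$\Leftrightarrow$(iii) and (i)$\Leftrightarrow$(ii) as separate biconditionals where the paper runs a cycle (i)$\Rightarrow$(ii)$\Rightarrow$(iii)$\Rightarrow$(i), and you derive (c) by applying (a)(iii) with $\pi=\sigma$ whereas the paper argues (c) directly from the definition of $\CChat$. These are cosmetic reorganizations, not a different route.
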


\begin{proof}
$\text{(i)} \Rightarrow \text{(ii)}$: Suppose (ii) is false, so $\CC(\sigma) \cup \CC(\rc{\sigma}) = \CC$. Since $\sigma \not\in \CC(\sigma)$, this contradicts (i).

$\text{(ii)} \Rightarrow \text{(iii)}$: By (ii), there is $\pi \in \CC$ that is not in $\CC(\sigma) \cup \CC(\rc{\sigma})$. Then $\pi$ avoids neither $\sigma$ nor $\rc{\sigma}$, which implies (iii).

$\text{(iii)} \Rightarrow \text{(i)}$: Let $\pi \in \CC$ contain $\sigma$ and $\rc{\sigma}$. If $\DD$ is a class such that $\CC = \DD \cup \rc{\DD}$, then either $\pi \in \DD$ or $\pi \in \rc{\DD}$; in the former case we get $\sigma \in \DD$, and in the latter case we get $\rc{\sigma} \in \rc{\DD}$ so $\sigma \in \DD$. Thus $\sigma$ is in every $\DD$ such that $\CC = \DD \cup \rc{\DD}$, which means that $\sigma \in \CChat$.

Part (b) follows immediately from the equivalence of conditions (i) and (iii). For part (c), let $\rho \in \inv{\CC}$, meaning $\rho = \rc{\rho}$. If $\CC = \DD \cup \rc{\DD}$, then without loss of generality $\rho \in \DD$, so $\rho = \rc{\rho} \in \rc{\DD}$, and so $\rho \in \DD \cap \rc{\DD}$. Therefore $\rho \in \CChat$.
\end{proof}

The property in (b) is an analog of the joint-embedding property, which a class $\CC$ satisfies when for every $\sigma, \tau \in \CC$ there is $\pi \in \CC$ that contains $\sigma$ and $\tau$. The joint-embedding property is equivalent to being atomic.

Let $\CC$ be $rc$-invariant. As we discussed above, we should not expect $\upgrrc{\CC} = \upgr{\CC}$ if $\CC$ is not $rc$-atomic. We could hope that this equality must hold when $\CC$ is $rc$-atomic, or under either of two stronger conditions: that $\CC$ is atomic, or that $\CC$ is generated by the permutations in $\textstyle\bigcup_n \CC_{2n}^{rc}$ (the even-size centrosymmetric permutations in $\CC$). We will see in Proposition \ref{prop:counterexample} that even these strong conditions are not enough.

\subsection{Centrosymmetric permutations avoiding a monotone pattern}

In this subsection we prove:

\begin{thm} \label{thm:monotone}
For all $k \ge 1$, $\grrc{\Av{k \ldots 1}}$ exists and equals $\gr{\Av{k \ldots 1}}$.
\end{thm}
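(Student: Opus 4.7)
The plan is to prove both $\logrrc{\Av{k\ldots 1}} \ge (k-1)^2$ and $\upgrrc{\Av{k\ldots 1}} \le (k-1)^2$, using that $\gr{\Av{k\ldots 1}} = (k-1)^2$ by Regev's theorem.

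The lower bound is immediate from Theorem \ref{thm:onedirection}: the class $\Av{k\ldots 1}$ is sum closed, since the longest decreasing subsequence of $\sigma \oplus \tau$ equals $\max(\mathrm{lds}(\sigma), \mathrm{lds}(\tau))$, and it is manifestly $rc$-invariant.

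The upper bound I would attack via the Robinson--Schensted correspondence. Each $\pi \in \Av{k\ldots 1}_{2n}$ corresponds to a pair $(P,Q)$ of standard Young tableaux of the same shape $\lambda \vdash 2n$ with $\ell(\lambda) \le k-1$, and by a classical theorem of Sch\"utzenberger, $\rc{\pi}$ corresponds to $(P^e, Q^e)$, where $e$ denotes Sch\"utzenberger evacuation. Thus $\pi$ is centrosymmetric iff both $P$ and $Q$ are self-evacuating, and
\[
|\inv{\Av{k\ldots 1}}_{2n}| \;=\; \sum_{\substack{\lambda \vdash 2n \\ \ell(\lambda) \le k-1}} (se^\lambda)^2,
\]
where $se^\lambda$ counts self-evacuating SYT of shape $\lambda$. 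By the Stanton--White bijection, $se^\lambda$ equals the number $d^\lambda$ of standard domino tableaux of shape $\lambda$, which vanishes unless $\lambda$ has empty $2$-core. When the $2$-core is empty, the $2$-quotient formula gives $d^\lambda = \binom{n}{|\mu^{(0)}|} f^{\mu^{(0)}} f^{\mu^{(1)}}$ with $|\mu^{(0)}| + |\mu^{(1)}| = n$ and $\ell(\mu^{(0)}) + \ell(\mu^{(1)}) \le \ell(\lambda) \le k-1$.

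To close the upper bound, I would apply $\sum_\lambda (d^\lambda)^2 \le (\max_\lambda d^\lambda) \cdot \sum_\lambda d^\lambda$ and show both factors are $O((k-1)^n \cdot \mathrm{poly}(n))$, using that $\sum_{\mu \vdash a,\, \ell(\mu) \le K} f^\mu$ counts involutions in $\Av{(K+1)\ldots 1}_a$ (with growth rate $K$) and that $\sum_a \binom{n}{a} K_0^{\,a} K_1^{\,n-a} = (k-1)^n$ when $K_0 + K_1 = k - 1$. The main obstacle is the length bookkeeping through the $2$-quotient: verifying $\ell(\mu^{(0)}) + \ell(\mu^{(1)}) \le \ell(\lambda)$ from the beta-number construction, and convolving the involution-count asymptotics so that the exponential factors combine to exactly $(k-1)^{2n}$ without spurious blowup from the binomial coefficient.
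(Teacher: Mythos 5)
Your outline is sound and ultimately arrives at the same place the paper does, but by a more laborious route. The paper simply cites Egge's identity
\[
|\inv{\Avn{2n}{k\ldots1}}| \;=\; \sum_{i=0}^n \binom{n}{i}^2 \,\bigl|\Avn{i}{p\ldots1}\bigr|\,\bigl|\Avn{n-i}{q\ldots1}\bigr|,
\qquad p=\lceil\smallfrac{k+1}{2}\rceil,\ q=\lfloor\smallfrac{k+1}{2}\rfloor,
\]
(from Egge~\cite{Egge2}, whose own proof goes through RS and evacuation, exactly as you begin), and then finishes with the elementary bound $\sum_i |x_i|^2 \le \left(\sum_i |x_i|\right)^2$ applied to $x_i = \binom{n}{i}(p-1)^i(q-1)^{n-i}$, collapsing to $(p+q-2)^{2n}=(k-1)^{2n}$. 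You are, in effect, re-proving Egge's identity from scratch via self-evacuating SYT $\to$ Stanton--White $\to$ domino tableaux $\to$ $2$-quotient, and then closing with $\sum_\lambda (d^\lambda)^2 \le (\max_\lambda d^\lambda)(\sum_\lambda d^\lambda)$ rather than the CJS trick. Both closing inequalities work; yours actually gives a marginally sharper constant, but neither affects the growth rate. The net effect is a correct but substantially longer proof, since you would need to supply the evacuation--domino machinery that the paper gets to cite.

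On the two worries you flag: both are real but manageable. For the length bookkeeping, $\ell(\mu^{(0)})+\ell(\mu^{(1)})\le\ell(\lambda)$ does follow from the beta-number construction (each part of $\mu^{(i)}$ comes from a bead of the corresponding parity), and in fact the emptiness of the $2$-core gives you more: with $\ell(\lambda)\le k-1$ beads, an empty $2$-core forces the parity counts to be as balanced as possible, so up to swapping labels $\ell(\mu^{(0)})\le\lceil\smallfrac{k-1}{2}\rceil$ and $\ell(\mu^{(1)})\le\lfloor\smallfrac{k-1}{2}\rfloor$, matching Egge's $p-1,q-1$. For the binomial blowup, $\binom{n}{a}K_0^a K_1^{n-a}\le (K_0+K_1)^n$ for every $a$ (since the left side is one term of the binomial expansion of the right), so both $\max_\lambda d^\lambda$ and $\sum_\lambda d^\lambda$ are indeed $O((k-1)^n\,\mathrm{poly}(n))$; summing over the finitely many admissible pairs $(K_0,K_1)$ does not change the exponential order. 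With those two points pinned down, your argument closes correctly, but I would recommend citing Egge's formula directly as the paper does, since the $2$-quotient detour reproduces it anyway.
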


We remark that $\Av{k \ldots1}$ and $\Av{1\ldots k}$ are reverses of each other, so $|\Avn{n}{k \ldots 1}| = |\Avn{n}{1 \ldots k}|$ and $|\inv{\Avn{n}{k \ldots 1}}| = |\inv{\Avn{n}{1\ldots k}}|$; thus this theorem applies to $\Av{1\ldots k}$ as well. We also remark that $\gr{\Av{k \ldots 1}} = (k-1)^2$, as proved by Regev \cite{Regev}. 

\begin{proof}[Proof of Theorem \ref{thm:monotone}]
Theorem \ref{thm:onedirection} shows that $\logrrc{\Av{k\ldots1}} \ge \gr{\Av{k \ldots 1}}$, so now it suffices to show that $\upgrrc{\Av{k\ldots1}} \le \gr{\Av{k \ldots 1}}$.

Set $a_m^j = |\Avn{m}{j\ldots1}|$. Egge \cite{Egge2} proved that
\begin{equation} \label{eqn:egge}
|\inv{\Avn{2n}{k \ldots 1}}| = \sum_{i=0}^n \binom{n}{i}^2 a_i^{\lceil (k+1)/2\rceil} \, a_{n-i}^{\lfloor (k+1)/2 \rfloor}.
\end{equation}
Set $p = \lceil (k+1)/2 \rceil$ and $q = \lfloor (k+1)/2 \rfloor$. Let $\varepsilon > 0$. Because $\gr{ a_m^p} = (p-1)^2$ and $\gr{ a_m^q} = (q-1)^2$, there is a constant $t$ such that $a_m^p \le t(1+\varepsilon)^m (p-1)^{2m}$ and $a_m^q \le t(1+\varepsilon)^m(q-1)^{2m}$ for all $m$.

We now employ a trick used by Claesson, Jel\'inek, and Steingr\'imsson \cite[Lem.\ 4]{CJS}:
\begin{align*}
|\inv{\Avn{2n}{k \ldots 1}}| &\le t^2 (1+\varepsilon)^n \sum_{i=0}^n \binom{n}{i}^2 (p-1)^{2i} (q-1)^{2(n-i)} & \text{(by \eqref{eqn:egge})} \\
&= t^2(1+\varepsilon)^n \sum_{i=0}^n \left[ \binom{n}{i} (p-1)^i (q-1)^{n-i} \right]^2 \\
&\le t^2(1+\varepsilon)^n \left[ \sum_{i=0}^n \binom{n}{i} (p-1)^i (q-1)^{n-i} \right]^2 & \text{(because $\textstyle\sum_i |x_i|^2 \le \left(\sum_i |x_i| \right)^2$)} \\
&= t^2(1+\varepsilon)^n (p+q-2)^{2n} \\
&= t^2 (1+\varepsilon)^n (k-1)^{2n}.
\end{align*}
The quantity $t^2 (1+\varepsilon)^n (k-1)^{2n}$ has growth rate $(1+\varepsilon)(k-1)^2$, so we have shown that $\upgrrc{\Av{k\ldots1}} \le (1+\varepsilon)(k-1)^2$. This holds for all $\varepsilon > 0$, so $\upgrrc{\Av{k\ldots1}} \le (k-1)^2$.
\end{proof}

\section{Geometric grid classes} \label{sec:geometric}

Let $A$ be a $\{0,1,-1\}$-matrix. The \emph{standard figure} of $A$ is obtained by replacing each $1$ (resp.\ $-1$) in $A$ with a line segment of slope $1$ (resp.\ $-1$) and replacing each $0$ with empty space. For instance, $\begin{pmatrix}1 & -1 \\ 1 & 0\end{pmatrix}$ has
\[ \begin{matrix} \diagup & \diagdown \\ \diagup & \end{matrix} \]
as its standard figure. If we choose $n$ points on the standard figure of $A$ such that no two have the same horizontal or vertical coordinate, the result is a permutation $\pi$ of size $n$, and the set of points is called a \emph{drawing} of $\pi$ (on $A$). The set of permutations obtained in this way is a permutation class called the \emph{geometric grid class} of $A$, denoted $\geom{A}$. For instance, $\Geom{\begin{matrix}1 & -1 \end{matrix}}$ is the class of permutations made of an increasing sequence followed by a decreasing sequence. Also let $\geomn{A}{n}$ denote the set of size-$n$ permutations in $\geom{A}$.

Geometric grid classes were studied in depth in \cite{AABRV}; in particular, it is shown that $\geom{A}$ is atomic and has a rational generating function. Bevan \cite{Bevan} shows that $\geom{A}$ has a proper growth rate and gives a way to find that growth rate from $A$. In an abuse of notation, we will refer to $rc$ acting on the entries of a centrosymmetric permutation $\pi$, the cells of a centrosymmetric matrix $A$, or the points in a drawing of $\pi$ on $A$.

\begin{prop}
If $A$ is a centrosymmetric $\{0,1,-1\}$-matrix, then $\geom{A}$ is $rc$-invariant, and $\geom{A}$ is generated by the permutations in $\textstyle\bigcup_n \inv{\geomn{A}{2n}}$ (the even-size centrosymmetric permutations in $\geom{A}$).
\end{prop}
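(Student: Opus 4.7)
The plan is to analyze how the $180^\circ$ rotation acts on drawings on the standard figure $F$ of $A$. The crucial observation is that, because $A$ is centrosymmetric as a $\{0,1,-1\}$-matrix (so slopes as well as nonzero positions are preserved under the rotation), $F$ itself is invariant under the $180^\circ$ rotation about its center: a slope-$1$ segment rotates to a slope-$1$ segment, and likewise for slope $-1$. For the $rc$-invariance claim I would then take $\pi \in \geom{A}$ with a drawing $D \subseteq F$ and observe that the rotated image $rc(D)$ still lies in $F$, has distinct $x$- and $y$-coordinates, and represents $\rc{\pi}$, so $\rc{\pi} \in \geom{A}$.

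For the second claim, given $\pi \in \geom{A}$ of size $n$, the goal is to build an even-size centrosymmetric $\rho \in \geom{A}$ containing $\pi$. Starting with a drawing $D$ of $\pi$ on $F$, I would consider the union $D^* = D \cup rc(D)$. Provided all $2n$ points in $D^*$ have distinct $x$- and $y$-coordinates, $D^*$ is a drawing of some permutation $\rho$ of size $2n$: this $\rho$ lies in $\geom{A}$ because $D^*$ is a drawing on $F$, it is centrosymmetric because $D^*$ equals its own rotation, and it contains $\pi$ because the sub-drawing $D$ realizes $\pi$ inside $D^*$.

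The step I expect to be the main obstacle is arranging for $D$ to be in \emph{generic position} so that $D \cap rc(D) = \emptyset$ and no two points of $D \cup rc(D)$ share a coordinate. Writing $A$ as a matrix with $c$ columns and $r$ rows, these conditions amount to forbidding $p_x + q_x = c$ and $p_y + q_y = r$ for every pair $p, q \in D$, including $p = q$. Each point of $D$ can slide along the line segment in its cell without changing the column it occupies, the row it occupies, or the pattern represented by $D$; thus the obstruction reduces to avoiding finitely many ``bad'' coordinates for each point, which small perturbations always achieve. A small amount of care is required in the edge case where $c$ and $r$ are both odd and the center cell of $A$ is nonzero, since then a point of $D$ could initially lie at the exact center $(c/2, r/2)$ of $F$; but any such point can still be nudged off-center along its segment, after which the generic-position argument applies.
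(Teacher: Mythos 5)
Your proposal is correct and takes essentially the same approach as the paper: show $rc$ sends drawings on $F$ to drawings on $F$, then take the union of a drawing of $\pi$ with its $rc$-image to obtain a centrosymmetric permutation of size $2n$ containing $\pi$. The only difference is cosmetic — the paper perturbs after forming the union, while you perturb $D$ into generic position first (and also flag the center-of-$F$ edge case), which is a slightly more careful version of the same step.
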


\begin{proof}
Let $\pi \in \geomn{A}{n}$. Since $A$ is centrosymmetric, applying $rc$ to a drawing of $\pi$ on $A$ results in a drawing of $\rc{\pi}$ on $A$, proving that $\geom{A}$ is $rc$-invariant. Furthermore, the union of these drawings of $\pi$ and $\rc{\pi}$ is a centrosymmetric set of points, which, after perturbing any points with the same horizontal or vertical coordinate, is the drawing of a centrosymmetric permutation $\rho$. We have $\rho \in \inv{\geomn{A}{2n}}$, and $\pi$ is contained in $\rho$.
\end{proof}

We now come to another example where $\grrc{\CC} < \gr{\CC}$: namely, $\CC = \Geom{\begin{matrix} -1 & 1 \\ 1 & -1 \end{matrix}}$. The standard figure of this matrix is an $X$, and this class has been called the \emph{$X$-class}. It has been enumerated by Elizalde \cite{ElizaldeX}, and its growth rate is $2+\sqrt{2}$. However:

\begin{prop} \label{prop:counterexample}
For $\CC = \Geom{\begin{matrix} -1 & 1 \\ 1 & -1 \end{matrix}}$, we have $|\inv{\CC}_{2n}| = 2^n$.
\end{prop}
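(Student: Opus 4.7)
I will show that $\inv{\CC}_{2n}$ is exactly the set of size-$2n$ permutations $\pi$ with $\pi(i) \in \{i,\, 2n+1-i\}$ for every $i \in [2n]$. Since each $i \in [n]$ admits two independent choices for $\pi(i)$ (with $\pi(2n+1-i)$ forced by centrosymmetry), this gives $|\inv{\CC}_{2n}| = 2^n$.

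\textbf{Construction ($\supseteq$).}
Realize the standard figure of $\begin{pmatrix} -1 & 1 \\ 1 & -1 \end{pmatrix}$ as the union of the segments $y = x$ and $y = 2 - x$ inside $[0, 2]^2$. For each $\tau \in \{+, -\}^n$ and any $0 < a_1 < \cdots < a_n < 1$, I place two points per index $i \in [n]$: the pair $(a_i, a_i)$ and $(2 - a_i, 2 - a_i)$ on $y = x$ if $\tau_i = +$, or the pair $(a_i, 2 - a_i)$ and $(2 - a_i, a_i)$ on $y = 2 - x$ if $\tau_i = -$. This yields a set of $2n$ points on the standard figure with distinct $x$- and $y$-coordinates, invariant under $(x, y) \mapsto (2 - x, 2 - y)$, hence a centrosymmetric drawing. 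A direct rank computation (the $x$-coordinates $a_1 < \cdots < a_n < 1 < 2 - a_n < \cdots < 2 - a_1$ have $x$-ranks $1, 2, \ldots, 2n$, and similarly for the $y$-coordinates) shows the represented permutation $\pi_\tau$ satisfies $\pi_\tau(i) = i$ if $\tau_i = +$ and $\pi_\tau(i) = 2n+1-i$ if $\tau_i = -$ for $i \le n$, producing $2^n$ distinct elements of $\inv{\CC}_{2n}$.

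\textbf{Reverse direction ($\subseteq$).}
I would proceed by induction on $n$, using the key lemma: \emph{$\pi(1) \in \{1, 2n\}$ for every $\pi \in \inv{\CC}_{2n}$.} Granted this lemma, deleting positions $1$ and $2n$ (together with the values $\pi(1)$ and $\pi(2n) = 2n+1-\pi(1)$, and reindexing) produces a centrosymmetric permutation of size $2(n-1)$ in $\CC$, and the inductive hypothesis propagates to give $\pi(i) \in \{i, 2n+1-i\}$ for all $i$. The base case $n = 1$ is immediate, since both centrosymmetric size-$2$ permutations ($12$ and $21$) lie in $\CC$.

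\textbf{Main obstacle.}
The key lemma is the delicate step. Supposing $\pi \in \inv{\CC}_{2n}$ has $\pi(1) = k$ with $1 < k < 2n$, I aim to derive a contradiction from any drawing of $\pi$ on the $X$. Each point lies on $y = x$ (type $+$) or $y = 2 - x$ (type $-$); splitting into the four type combinations for $(\tau_1, \tau_{2n})$ and using the centrosymmetry relation $\pi(2n) = 2n+1-k$, one extracts a system of sign-conditions on the $x$-coordinates, each of the form $x_i + x_j < 2$ or $x_i + x_j > 2$ arising from comparing $y$-values across the two lines. My expectation, based on the $n = 2$ computation---where $\pi(1) = 2$ yields only $2143$ or $2413$, and $\pi(1) = 3$ yields only $3142$ or $3412$, each ruled out by an incompatible pair of such inequalities---is that these sign-conditions are jointly infeasible whenever $k$ is strictly interior. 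Carrying out this case analysis cleanly for general $n$ and $k$ will be the most technical part of the proof.
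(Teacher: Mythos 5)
Your overall strategy is the same as the paper's: show that every $\pi \in \inv{\CC}_{2n}$ has $\{\pi(1),\pi(2n)\} = \{1,2n\}$, remove those two entries, and induct. The construction direction ($\supseteq$) is carried out cleanly and correctly, and the inductive framework in the reverse direction is also sound. But the argument has a genuine gap exactly where you flag it: the ``key lemma'' that $\pi(1) \in \{1,2n\}$ is never actually proved. You describe it as ``the delicate step'' and ``the most technical part of the proof,'' sketch an expectation that a system of sign-conditions on $x$-coordinates will be jointly infeasible for $1 < k < 2n$, check it for $n = 2$, and leave the general case undone. As written, the proposal is incomplete.

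What the paper does instead is invoke a result from the literature: Lemma 3.1 of Elizalde's paper on the $X$-class (\cite{ElizaldeX}) already establishes that every permutation in $\Geom{\begin{smallmatrix}-1 & 1 \\ 1 & -1\end{smallmatrix}}$ has an entry in at least one of the four corners of its diagram. From that, centrosymmetry immediately forces entries in two opposite corners, and the induction proceeds as you describe. So to close your gap you would either need to cite Elizalde's lemma or actually carry out the infeasibility argument for arbitrary $n$ and $k$ --- which is real work, not a formality. Everything else in your proposal is fine and matches the paper's argument in substance; the missing piece is precisely the corner lemma.
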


\begin{proof}
Let $\pi \in \inv{\CC}_{2n}$ for $n \ge 1$. By \cite[Lem.\ 3.1]{ElizaldeX}, $\pi$ must have an entry in at least one of the four corners --- that is, $\pi(1) \in \{1, 2n\}$ or $\pi(2n) \in \{1, 2n\}$. Since $\pi$ is centrosymmetric, it must have an entry in two opposite corners --- that is, $\{\pi(1), \pi(2n)\} = \{1, 2n\}$. This gives us a total of two options for $\pi(1)$ and $\pi(2n)$; removing these entries yields a permutation in $\inv{\CC}_{2n-2}$, and the result follows by induction.
\end{proof}

Thus, $\grrc{\CC} = 2 < \gr{\CC}$. This example is dramatic: even for a class that is atomic, is generated by its centrosymmetric permutations, and has a rational generating function, it is not necessarily true that $\upgrrc{\CC} = \upgr{\CC}$.

Let $A$ be a $\{0,1,-1\}$-matrix. An \emph{$A$-gridded} permutation (on $A$) is a permutation $\pi$ with a valid choice of which cell of $A$ to draw each entry of $\pi$ on. Let $\griddings{A}$ be the set of $A$-gridded permutations. Given $A$, each $\pi$ has a finite number of griddings on $A$, and the maximum number of griddings over all size-$n$ permutations is bounded above by a polynomial in $n$; thus $\gr{\griddings{A}} = \gr{\geom{A}}$.

Again abusing notation, we say $rc$ acts on $A$-gridded permutations (when $A$ is centrosymmetric), and we say an $A$-gridded permutation fixed by $rc$ is centrosymmetric. In order for a gridded permutation to be centrosymmetric, the permutation must be centrosymmetric and its gridding on $A$ must be centrosymmetric. Let $\inv{\griddings{A}}$ denote the set of centrosymmetric $A$-gridded permutations, and define $\grrc{\griddings{A}}$ the same way as the $rc$--growth rate of a permutation class.

The \emph{cell graph} of $A$ is the graph whose vertices are the non-zero cells of $A$, where two cells are adjacent if (1) they share a row or column and (2) there are no non-zero cells between them in their row or column. For instance, $\begin{pmatrix}-1 & 1 \\ 1 & -1\end{pmatrix}$ (the matrix for the $X$-class) has
\[ \begin{tikzpicture}
\draw (0,0) [fill=black] circle (0.1);
\draw (0,1) [fill=black] circle (0.1);
\draw (1,0) [fill=black] circle (0.1);
\draw (1,1) [fill=black] circle (0.1);
\draw [thick] (0,0) -- (0,1) -- (1,1) -- (1,0) -- (0,0);
\end{tikzpicture} \]
as its cell graph. The fact that this is a cycle will help explain the $X$-class's behavior, as we will see in Theorem \ref{thm:geometric2}.

If $A$ is centrosymmetric, then $rc$ acting on the cells of $A$ induces an automorphism of the cell graph of $A$. Again abusing notation, we will call this automorphism $rc$. In particular, $rc$ maps each component of the graph onto either itself or a different component.

\begin{thm} \label{thm:geometric2}
Let $A$ be a centrosymmetric $\{0,1,-1\}$-matrix, let $G$ be the cell graph of $A$, and assume without loss of generality that $A$ has an even number of rows and an even number of columns. Each statement implies the next:
\begin{itemize}
\item[(i)] $G$ is a forest (has no cycles);
\item[(ii)] $rc$ maps every component of $G$ onto a different component;
\item[(iii)] $\logrrc{\geom{A}} \ge \gr{\geom{A}}$.
\end{itemize}
\end{thm}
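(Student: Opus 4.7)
The plan is to prove the two implications in turn: (i) $\Rightarrow$ (ii) via a standard fact about automorphisms of finite trees, and (ii) $\Rightarrow$ (iii) by constructing an injection from tuples of gridded permutations on single components into $\inv{\griddings{A}}_{2n}$.

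For (i) $\Rightarrow$ (ii), I would suppose for contradiction that some tree component $T$ of $G$ satisfies $\rc{T} = T$. Then $rc|_T$ is an automorphism of the finite tree $T$, which by a standard fact must fix either a vertex or an edge setwise. A fixed vertex would be a cell $(r,c) = (R+1-r, C+1-c)$, forcing $R$ and $C$ both odd, contradicting the even-dimension hypothesis. Because no vertex is fixed, a setwise-fixed edge must be a pair of adjacent cells swapped by $rc$; adjacency means they share a row or column, so being swapped forces either $r = R+1-r$ or $c = C+1-c$, again a contradiction. Hence no tree component is fixed by $rc$, proving (ii).

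For (ii) $\Rightarrow$ (iii), the key structural observation is that any two non-zero cells of $A$ sharing a row are connected by a chain of row-edges in $G$ and therefore lie in the same component (and likewise for columns), so the components of $G$ partition the non-empty rows and columns of $A$. Label the components so that $\rc{C_i} = C_{m+i}$ for $i = 1, \ldots, m$, and let $A_i$ denote the submatrix of $A$ restricted to the rows and columns of $C_i$. The disjointness of rows and columns across components gives a bijection
\[
\griddings{A}_n \;\longleftrightarrow\; \bigsqcup_{n_1 + \cdots + n_{2m} = n}\; \prod_{j=1}^{2m} \griddings{A_j}_{n_j},
\]
since the global row- and column-sizes, and hence the global positions and values, are determined cell-by-cell by the per-component data. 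Given a tuple $(\tau_1, \ldots, \tau_m)$ with $\tau_i \in \griddings{A_i}_{k_i}$, I would assemble $\rho$ of size $2n$ (where $n = \sum k_i$) by placing $\tau_i$ on $C_i$ and $\rc{\tau_i}$ on $C_{m+i}$. Then I would verify that $\rho \in \inv{\griddings{A}}_{2n}$: the induced column- and row-sizes are mirror-symmetric by construction, and in each cell the entries on $C_{m+i}$ are by definition the $rc$-image of the entries on $C_i$. The assembly is injective since one recovers $\tau_i$ by restricting $\rho$ to $C_i$.

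This injection yields $|\inv{\griddings{A}}_{2n}| \ge \sum_{\sum k_i = n} \prod_{i=1}^m |\griddings{A_i}_{k_i}| \ge |\griddings{A_{i_0}}_n|$ for any chosen $i_0$. Picking $i_0$ that maximizes $\gr{\griddings{A_{i_0}}}$, the same component decomposition shows $\gr{\griddings{A}} = \max_i \gr{\griddings{A_i}}$, with each component rate a proper growth rate by Bevan's theorem. Since each permutation admits at most polynomially many griddings on $A$, we have $\gr{\griddings{A}} = \gr{\geom{A}}$ and the forgetful map $\inv{\griddings{A}} \to \inv{\geom{A}}$ is polynomially many-to-one, giving $\logrrc{\geom{A}} \ge \logrrc{\griddings{A}} \ge \gr{\griddings{A}} = \gr{\geom{A}}$, which is (iii). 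The main bookkeeping hurdle is confirming that the assembled $\rho$ is genuinely a well-defined $A$-gridded permutation and genuinely centrosymmetric; both facts hinge on the disjoint-rows-and-columns property of components, after which the growth-rate estimates are routine.
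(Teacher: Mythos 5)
Your proof is correct and follows essentially the same route as the paper's. For (i)$\Rightarrow$(ii) you invoke the standard theorem that a finite-tree automorphism fixes a vertex or an edge, whereas the paper re-derives that fact from the uniqueness of the path between a vertex and its $rc$-image; and for (ii)$\Rightarrow$(iii) you split $\griddings{A}$ into a per-component convolution over $C_1,\dots,C_{2m}$ rather than into the two halves $A_X$ and $A_Y = \rc{A_X}$, but the underlying mechanism — the disjoint-rows-and-columns property of components making the assembly unambiguous, the construction of a centrosymmetric gridded permutation by pasting $\tau$ onto one half and $\rc{\tau}$ onto the $rc$-image, and the final pass from gridded to plain permutations via the polynomial bound on griddings — is identical.
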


We remark that, if $A$ has an odd number of rows or an odd number of columns, then $A$ can be replaced with the matrix $A^{\times2}$ obtained by replacing each $1$ with $\begin{pmatrix}0 & 1 \\ 1 & 0\end{pmatrix}$, each $-1$ with $\begin{pmatrix} -1 & 0 \\ 0 & -1\end{pmatrix}$, and each $0$ with $\begin{pmatrix} 0 & 0 \\ 0 & 0 \end{pmatrix}$. The standard figure of $A^{\times2}$ is the same as that of $A$, just stretched by a factor of $2$ in each direction. Consequently, $\geom{A^{\times2}} = \geom{A}$, which is why there is no loss of generality from the assumption that $A$ has an even number of rows and an even number of columns.

\begin{proof}[Proof of Theorem \ref{thm:geometric2}]
$\text{(i)} \Rightarrow \text{(ii)}$: Assume $G$ is a forest, and suppose $G$ has a component that is mapped onto itself by $rc$. This component must be a tree; call this tree $T$. Let $v$ be a vertex in $T$; since $rc$ maps $T$ to itself, $rc(v)$ is also in $T$. Thus there is a path in $T$ between $v$ and $rc(v)$; call this path $P$. Observe that $rc(P)$ is also a path in $T$ between $v$ and $rc(v)$, but there is only one such path because $T$ is a tree, so $rc(P) = P$. Thus the center element of $P$, which is a vertex or edge of $G$, is mapped to itself by $rc$. But $G$ cannot have a vertex or edge mapped to itself by $rc$, because $A$ has an even number of rows and an even number of columns. This is a contradiction, so no component of $G$ is mapped onto itself.

$\text{(ii)} \Rightarrow \text{(iii)}$: Assume $rc$ maps every component of $G$ onto a different component. Thus the components of $G$ come in pairs, each pair consisting of two components that map onto each other under $rc$. Let $X$ be a subgraph consisting of one component from each pair, and let $Y$ be the subgraph consisting of the other components. Then $X$ and $Y$ form a partition of the vertices and edges of $G$, and $Y = \rc{X}$, and there are no edges between $X$ and $Y$.

Let $A_X$ (resp.\ $A_Y$) be the matrix obtained from $A$ by keeping the cells that are vertices in $X$ (resp.\ $Y$) and replacing the rest of the cells with $0$. Note that $A_Y = \rc{A_X}$, so $|\geomn{A_X}{n}| = |\geomn{A_Y}{n}|$ and $|\griddingsn{A_X}{n}| = |\griddingsn{A_Y}{n}|$. Because there are no edges between $X$ and $Y$ in the cell graph, there is no non-zero entry of $A_X$ in the same row or column as a non-zero entry of $A_Y$.

Recall that $\gr{\geom{A}} = \gr{\griddings{A}}$. Every $A$-gridded permutation is obtained from a pair of an $A_X$-gridded permutation and an $A_Y$-gridded permutation. Every such pair of gridded permutations gives rise to exactly one $A$-gridded permutation, because points placed on $A_X$ and points placed on $A_Y$ do not interleave in multiple ways. Therefore, $\left|\griddingsn{A}{n}\right| = \sum_{k=0}^n {\left|\griddingsn{A_X}{k}\right|}^2$, and thus $\gr{\griddings{A}} = \gr{\griddings{A_X}}$.

Moreover, we have a bijection between $\griddingsn{A_X}{n}$ and $\inv{\griddingsn{A}{2n}}$: given an $A_X$-gridded permutation $\pi$, take the union of the drawing of $\pi$ on $A_X$ and the drawing of $\rc{\pi}$ on $A_Y$, yielding a centrosymmetric $A$-gridded permutation. This is a bijection because, again, points placed on $A_X$ and points placed on $A_Y$ do not interleave in multiple ways. Therefore, $\left|\griddingsn{A_X}{n}\right| = \left|\inv{\griddingsn{A}{2n}}\right|$, and in particular $\gr{\griddings{A_X}} = \grrc{\griddings{A}}$ (and the latter growth rate is proper).

Finally, because the maximum number of griddings of a permutation of size $2n$ is bounded above by a polynomial, $\grrc{\griddings{A}}$ equals the growth rate of the number of size-$2n$ centrosymmetric permutations in $\geom{A}$ \emph{that have a centrosymmetric gridding on $A$}, which is less than or equal to $\logrrc{\geom{A}}$.
\end{proof}

The reason we get an inequality instead of an equality at the end of this proof is subtle: a centrosymmetric permutation in $\geom{A}$ by definition can be drawn on the standard figure of $A$, but not necessarily in a centrosymmetric way. The smallest instance of this phenomenon is with $A = \begin{pmatrix} 1 & 0 \\ 0 & 1 \end{pmatrix}$. The permutation $12$ is centrosymmetric, and it can be drawn on the standard figure of $A$, but every drawing of it has both entries of $12$ in the top-left cell or both entries in the lower-right cell, neither of which is a centrosymmetric gridding. More complicated instances of this phenomenon are not hard to find.

For any centrosymmetric $\{0,1,-1\}$-matrix $A$, we conjecture that ``almost all'' even-size centrosymmetric permutations in $\geom{A}$ have a centrosymmetric gridding on $A$, in the sense that the ones without such a gridding have a strictly smaller growth rate. This would imply that $\gr{\geom{A}} = \grrc{\geom{A}}$ under the conditions of Theorem \ref{thm:geometric2}. This equality is also implied by Conjecture \ref{conj:main}.

\section{Sum closed classes} \label{sec:sumclosed}

For the rest of this paper, we assume $\CC$ is a sum closed class. In this section, we investigate the centrosymmetric permutations in a sum closed class (Section \ref{sec:centrosymmetric-sumclosed}), and we give results drawing from the work of Pantone and Vatter \cite{PanVat} on sum closed classes with growth rate $\le \xi$ (Section \ref{sec:lessthanxi}). The main result is Theorem \ref{main-sumclosed2}, which gives a few conditions that each individually imply that $\grrc{\CC}$ exists and equals $\gr{\CC}$.

Recall that, if $A(x) = \sum_{n\ge0} |\CC_n|\,x^n$ and $C(x) = \sum_{n\ge1} |\ind{\CC}_n|\,x^n$, then
\begin{equation}
A(x) = \frac{1}{1-C(x)}. \label{eqn:ac}
\end{equation}
Also, $\CC$ has a proper growth rate, a fact which has been known essentially since Arratia \cite{Arratia}.

\subsection{Centrosymmetric permutations in a sum closed class} \label{sec:centrosymmetric-sumclosed}

For this subsection we assume that $\CC$ is $rc$-invariant. Recall from Section \ref{sec:unions} that $\CC$ is $rc$-atomic if it is not of the form $\DD \cup \rc{\DD}$ unless $\DD = \CC$, and $\CC$ is atomic if it is not the union of two proper subclasses. Since we assume $\CC$ is sum closed, it follows that $\CC$ is atomic and hence $rc$-atomic. The next fact we prove involves a stronger property than being $rc$-atomic.

\begin{prop}
If $\CC$ is sum closed and $rc$-invariant, then $\CC$ is generated by the permutations in ${\textstyle\bigcup_n} \,\inv{\CC}_{2n}$ (\textit{i.e.}\ the even-size centrosymmetric permutations in $\CC$).
\end{prop}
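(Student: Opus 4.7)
The plan is to reuse the construction from the proof of Theorem \ref{thm:onedirection}. To show that $\CC$ is generated by $\bigcup_n \inv{\CC}_{2n}$, it suffices to exhibit, for each $\sigma \in \CC$, an even-size centrosymmetric permutation in $\CC$ that contains $\sigma$ as a pattern. The natural candidate is $\rho = \rc{\sigma} \oplus \sigma$.

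First I would verify that $\rho \in \CC$ and has even size: if $\sigma \in \CC_n$, then $\rc{\sigma} \in \CC_n$ because $\CC$ is $rc$-invariant, and then $\rho = \rc{\sigma} \oplus \sigma \in \CC_{2n}$ because $\CC$ is sum closed. Next I would verify that $\rho$ is centrosymmetric, using the identity $\rc{(\alpha \oplus \beta)} = \rc{\beta} \oplus \rc{\alpha}$ and the involutivity of $rc$:
\[ \rc{\rho} = \rc{(\rc{\sigma} \oplus \sigma)} = \rc{\sigma} \oplus \rc{\rc{\sigma}} = \rc{\sigma} \oplus \sigma = \rho. \]
Hence $\rho \in \inv{\CC}_{2n}$. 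Finally, $\rho$ contains $\sigma$ trivially, as $\sigma$ appears as the second summand in the sum decomposition of $\rho$.

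Since $\sigma \in \CC$ was arbitrary, every element of $\CC$ is contained in some element of $\bigcup_n \inv{\CC}_{2n}$, which is precisely what it means for $\CC$ to be generated by this set of permutations. There is no real obstacle here; the statement is essentially a restatement of the injection constructed for Theorem \ref{thm:onedirection}, viewed from the perspective of pattern containment rather than counting.
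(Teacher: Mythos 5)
Your proof is correct and takes essentially the same approach as the paper's; the paper uses $\pi \oplus \rc{\pi}$ where you use $\rc{\sigma} \oplus \sigma$, but this is an immaterial choice of ordering of the two summands.
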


\begin{proof}
Let $\pi \in \CC$. Since $\CC$ is $rc$-invariant, $\rc{\pi} \in \CC$; since $\CC$ is sum closed, $\pi \oplus \rc{\pi} \in \CC$. Thus every $\pi \in \CC$ is contained in an even-length centrosymmetric permutation in $\CC$ (namely $\pi \oplus \rc{\pi}$), meaning $\CC$ is generated by its even-length centrosymmetric elements.
\end{proof}

Define
\begin{align*}
a_n &= |\CC_n|; & b_n &= |\inv{\CC}_{2n}|; & d_n &= |\indinv{\CC}_{2n}|; \\
A(x) &= \sum_{n\ge0} a_n x^n; & B(x) &= \sum_{n\ge0} b_n x^n; & D(x) &= \sum_{n\ge1} d_n x^n.
\end{align*}
Note that, in $B(x)$ and $D(x)$, we are taking the permutations in $\inv{\CC}_{2n}$ to have weight $n$, despite having size $2n$ as a permutation.

\begin{prop} \label{prop:gf} $B(x) = (1 + D(x))\,A(x)$.
\end{prop}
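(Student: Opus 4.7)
The plan is to decompose each centrosymmetric $\rho \in \inv{\CC}_{2n}$ using its unique sum-indecomposable factorization and exploit how $rc$ acts on that factorization. Write $\rho = \rho_1 \oplus \cdots \oplus \rho_k$ with each $\rho_i$ indecomposable. Since $rc$ reverses sum factorizations, $\rc{\rho_1 \oplus \cdots \oplus \rho_k} = \rc{\rho_k} \oplus \cdots \oplus \rc{\rho_1}$, and by uniqueness of the decomposition, $\rho = \rc{\rho}$ forces $\rc{\rho_i} = \rho_{k+1-i}$ for all $i$. The bijective classification then splits according to the parity of $k$.

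First I would handle the case where $k$ is even. Here the blocks pair up with no fixed block, the left half sequence $(\rho_1, \dots, \rho_{k/2})$ is an arbitrary (possibly empty) finite sequence of indecomposable permutations in $\CC$, and the right half is determined by $\rc$. Since $\CC$ is $rc$-invariant and sum closed, every such sequence gives rise to a valid element of $\inv{\CC}_{2n}$, where $n$ is the total size of the left half. By \eqref{eqn:ac}, the generating function for sequences of elements of $\ind{\CC}$ (with $x$ tracking the total size of the sequence) is exactly $A(x) = 1/(1-C(x))$. So this case contributes $A(x)$ to $B(x)$.

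Next I would handle $k$ odd. The symmetry $\rc{\rho_i} = \rho_{k+1-i}$ with $i = (k+1)/2$ forces the middle block to be centrosymmetric and indecomposable. Observe that the non-middle blocks come in $rc$-paired pairs, each pair contributing an even total size, so the middle block must have even size, say $2m$; this is the crucial parity observation that makes the formula land in the ring $\mathbb{Z}[[x]]$ with the stated half-size weighting. The middle block then contributes $x^m$ summed as $D(x)$, while the left-of-center sequence of indecomposables (of total size $n - m$) contributes $A(x)$ as before, yielding a total contribution of $D(x) \cdot A(x)$.

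Summing the two cases gives $B(x) = A(x) + D(x)\,A(x) = (1 + D(x))\,A(x)$. The only step that requires any genuine care is verifying that the map from decomposition data (a sequence of indecomposables, optionally with a centrosymmetric middle) to $\inv{\CC}_{2n}$ is a bijection: surjectivity uses that $\CC$ is sum closed and $rc$-invariant (so the reconstructed sum lies in $\CC$ and is centrosymmetric), and injectivity uses the uniqueness of the sum-indecomposable factorization. The parity argument pinning down that the middle block has even size is the main subtlety, and I expect it to be the one place the proof needs an explicit sentence rather than a one-line invocation of known facts.
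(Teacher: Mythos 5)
Your proof is correct and takes essentially the same approach as the paper: decompose $\rho$ into its unique sum-indecomposable factorization, note that $rc$ reverses the block sequence, and split on the parity of the number of blocks (the paper packages the left half as a single $\pi \in \CC$ rather than a sequence of indecomposables, but these are equivalent by \eqref{eqn:ac}). Your explicit parity argument showing that the fixed middle block must have even size is a nice clarification of a point the paper's proof leaves implicit.
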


\begin{proof}
The left side counts even-size permutations in $\inv{\CC}$ (with weight half their size), and the right side counts ordered pairs $(\widetilde{\rho}, \pi)$ where $\pi \in \CC$ and $\widetilde{\rho} \in \{\varepsilon\} \cup \indinv{\CC}$ (with $\widetilde{\rho}$ of even size, counted with weight half its size, and $\varepsilon$ denotes the empty permutation). With $\pi$ and $\widetilde{\rho}$ as such, consider the permutation $\rho = \rc{\pi} \oplus \widetilde{\rho} \oplus \pi$. We see that $\rho \in \CC$ because $\rc{\pi}, \widetilde{\rho}, \pi \in \CC$; we see that $\rho$ is centrosymmetric because $\widetilde{\rho}$ is centrosymmetric, so
\[ \rc{\rho} = \rc{\pi} \oplus\rc{\widetilde{\rho}} \oplus \rc{\rc{\pi}} = \rc{\pi} \oplus \widetilde{\rho} \oplus \pi = \rho; \]
and we see that the weight of $\rho$ (which is half its size) is the sum of the weights of $\pi$ and $\widetilde{\rho}$.

Furthermore, this correspondence is a bijection. Let $\rho \in \inv{\CC}_{2n}$ be arbitrary. If $\rho$ has an even number of indecomposable blocks, then $\rho$ has a unique decomposition as $\rho = \rc{\pi} \oplus \pi$ for some $\pi \in \CC$. If $\rho$ has an odd number of indecomposable blocks, then $\rho$ has a unique decomposition as $\rho = \rc{\pi} \oplus \widetilde{\rho} \oplus \pi$ for some $\pi \in \CC$ and $\widetilde{\rho} \in \indinv{\CC}$.\end{proof}

\begin{prop} \label{inequality}
If $\CC$ is sum closed and $rc$-invariant, then $\upgrrc{\CC} = \max\{\gr{\CC}, \upgrrc{\ind{\CC}}\}$ and $\logrrc{\CC} \ge \max\{\gr{\CC}, \logrrc{\ind{\CC}}\}$.
\end{prop}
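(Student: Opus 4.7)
The plan is to extract both assertions from the generating function identity of Proposition \ref{prop:gf}, read off at the coefficient level as
\[ b_n \;=\; a_n + \sum_{k=1}^n d_k \, a_{n-k}, \]
where $a_n, b_n, d_n$ are the sequences introduced just before that proposition (and $a_0 = 1$). Since all terms are nonnegative, this yields the two pointwise bounds $b_n \ge a_n$ and $b_n \ge d_n$ (the latter using $a_0 = 1$). Taking $\limsup$ and $\liminf$ of the $n$-th roots gives at once $\logrrc{\CC} \ge \gr{\CC}$, $\logrrc{\CC} \ge \logrrc{\ind{\CC}}$, $\upgrrc{\CC} \ge \gr{\CC}$, and $\upgrrc{\CC} \ge \upgrrc{\ind{\CC}}$; the last three account for the full lower-bound statement and for the $\ge$ direction of the equality.

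What remains is the upper bound $\upgrrc{\CC} \le \max\{\gr{\CC}, \upgrrc{\ind{\CC}}\}$. Set $L := \max\{\gr{\CC}, \upgrrc{\ind{\CC}}\}$, which I may assume finite (else the bound is vacuous). Fix $\varepsilon > 0$. Since $\CC$ is sum closed, $\gr{\CC}$ exists as a proper growth rate, so there is a constant $K \ge 1$ with $a_n \le K(L + \varepsilon)^n$ for all $n$; and since $\limsup_n d_n^{1/n} = \upgrrc{\ind{\CC}} \le L$, after enlarging $K$ if necessary we also have $d_n \le K(L + \varepsilon)^n$ for all $n$ (the finitely many exceptional terms are absorbed into $K$). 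Substituting into the convolution,
\[ b_n \;\le\; K(L+\varepsilon)^n + \sum_{k=1}^n K(L+\varepsilon)^k \cdot K(L+\varepsilon)^{n-k} \;\le\; (n+1)\, K^2 (L + \varepsilon)^n, \]
so $\limsup_n b_n^{1/n} \le L + \varepsilon$. Letting $\varepsilon \to 0$ gives $\upgrrc{\CC} \le L$, as desired.

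I do not expect a genuine obstacle: the argument is simply the standard observation that the growth rate of a convolution of two nonnegative sequences equals the maximum of their individual growth rates, once the identity of Proposition \ref{prop:gf} is in hand. The only mild subtlety is the asymmetry between the proper growth rate $\gr{\CC}$ and the merely $\limsup$-type quantity $\upgrrc{\ind{\CC}}$, but this is harmlessly absorbed into the single constant $K$ in the upper-bound step. It is also worth noting that one does \emph{not} get the analogous equality for $\logrrc{\CC}$ from this approach, because the convolution $b_n = \sum_k d_k a_{n-k}$ is dominated above by $\max\{a_n, d_n\}$-type terms but bounded below only by a single term of the sum, which is why the proposition only asserts $\ge$ in the $\liminf$ version.
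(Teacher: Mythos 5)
Your proof is correct and follows essentially the same route as the paper's: both start from the convolution identity $b_n = a_n + \sum_{k=1}^n a_{n-k} d_k$ of Proposition \ref{prop:gf}, extract $b_n \ge a_n$ and $b_n \ge d_n$ for the lower bounds, and then bound the convolution by $(n+1)$ times a common exponential to get the matching upper bound on $\upgrrc{\CC}$. The only cosmetic difference is that the paper works with two separate parameters $x > \upgr{a_n}$, $y > \upgr{d_n}$ and takes their maximum, whereas you collapse both into a single $L + \varepsilon$, which is equivalent.
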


\begin{proof} From Proposition \ref{prop:gf} we obtain
\begin{equation} \label{eqn:abd} b_n = a_n + \sum_{k=1}^n a_{n-k} d_k. \end{equation}
Since all the numbers appearing in \eqref{eqn:abd} are non-negative (and $a_0 = 1$), we see that $b_n \ge d_n$ and $b_n \ge a_n$, which shows that $\upgr{b_n} \ge \max\{\gr{a_n}, \upgr{d_n}\}$ and $\logr{b_n} \ge \max\{\gr{a_n}, \logr{d_n}\}$

Let $x > \upgr{a_n}$ and $y > \upgr{d_n}$, and let $M = \max\{x,y\}$; then there is a constant $t$ such that $a_n \le tx^n$ and $d_n \le t y^n$ for all $n$. Then, by (1),
\[ b_n \le t^2 \sum_{k=0}^n x^{n-k} y^k \le t^2 (n+1) M^n. \]
The quantity $t^2(n+1)M^n$ has growth rate $M$, so we have shown that $\upgr{b_n} \le M$. This holds for every $M > \max\{\upgr{a_n}, \upgr{d_n}\}$, so we conclude that $\upgr{b_n} \le \max\{\upgr{a_n}, \upgr{d_n}\}$.
\end{proof}

Theorem \ref{thm:onedirection} follows immediately from Proposition \ref{inequality}, and we restate it here:

\begin{restate}{thm:onedirection}
If $\CC$ is sum closed and $rc$-invariant, then $\logrrc{\CC} \ge \gr{\CC}$.
\end{restate}

This theorem, if Conjecture \ref{conj:main} is true, would imply that $\grrc{\CC}$ exists and equals $\gr{\CC}$ for all sum closed $rc$-invariant $\CC$ (Conjecture \ref{conj:mainsumclosed}).

We have another corollary that follows from Proposition \ref{inequality}:

\begin{cor} \label{cor:upgrrcind}
Let $\CC$ be sum closed and $rc$-invariant. (a) $\upgrrc{\CC} = \gr{\CC}$ if and only if $\upgrrc{\ind{\CC}} \le \gr{\CC}$. (b) If the equivalent statements in (a) are true, then $\grrc{\CC}$ exists.
\end{cor}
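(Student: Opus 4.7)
The plan is to derive both parts of the corollary directly from Proposition \ref{inequality}, which gives the two key identities
\[ \upgrrc{\CC} = \max\{\gr{\CC}, \upgrrc{\ind{\CC}}\} \qquad \text{and} \qquad \logrrc{\CC} \ge \max\{\gr{\CC}, \logrrc{\ind{\CC}}\}. \]
No further construction should be needed; the argument is a short manipulation of these two statements together with the trivial inequality $\logrrc{\CC} \le \upgrrc{\CC}$.

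For part (a), I would use the first identity of Proposition \ref{inequality}. If $\upgrrc{\CC} = \gr{\CC}$, then $\max\{\gr{\CC}, \upgrrc{\ind{\CC}}\} = \gr{\CC}$, which forces $\upgrrc{\ind{\CC}} \le \gr{\CC}$. Conversely, if $\upgrrc{\ind{\CC}} \le \gr{\CC}$, then the max collapses to $\gr{\CC}$ and we conclude $\upgrrc{\CC} = \gr{\CC}$.

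For part (b), assume the condition in (a) holds, so $\upgrrc{\CC} = \gr{\CC}$. By Theorem \ref{thm:onedirection} (equivalently, the second inequality in Proposition \ref{inequality}), we have $\logrrc{\CC} \ge \gr{\CC} = \upgrrc{\CC}$. Since always $\logrrc{\CC} \le \upgrrc{\CC}$, equality holds, and $\grrc{\CC}$ exists.

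The only ``obstacle'' is bookkeeping: making sure to cite Proposition \ref{inequality} in both directions (both as an equation for the upper $rc$--growth rate and as an inequality for the lower one), and to note that part (b) relies on the lower-bound half, which is essentially Theorem \ref{thm:onedirection}. Since everything reduces to comparing maxima and using the tautology $\logrrc{\CC} \le \upgrrc{\CC}$, there is no hidden difficulty.
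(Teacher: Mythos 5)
Your proof is correct and takes essentially the same approach as the paper: part (a) follows immediately from the equality $\upgrrc{\CC} = \max\{\gr{\CC}, \upgrrc{\ind{\CC}}\}$, and part (b) follows by combining this with the lower bound $\logrrc{\CC} \ge \gr{\CC}$ (the paper routes through $\max\{\gr{\CC}, \logrrc{\ind{\CC}}\}$, but that is the same inequality).
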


\begin{proof}
Part (a) is immediate from the fact that $\upgrrc{\CC} = \max\{\gr{\CC}, \upgrrc{\ind{\CC}}\}$ (Proposition \ref{inequality}). For (b), if $\upgrrc{\ind{\CC}} \le \gr{\CC}$, then Proposition \ref{inequality} shows that
\[ \upgrrc{\CC} = \max\{\gr{\CC}, \upgrrc{\ind{\CC}}\} = \gr{\CC} = \max\{\gr{\CC}, \logrrc{\ind{\CC}}\} \le \logrrc{\CC}, \]
so $\upgrrc{\CC} = \logrrc{\CC}$.
\end{proof}


We come to the main theorem of this section. The main idea is that, if a sum closed, $rc$-invariant class $\CC$ has a small number of indecomposables (for various definitions of ``small''), then $\grrc{\CC}$ exists and equals $\gr{\CC}$.


\begin{thm} \label{main-sumclosed2}
Let $\CC$ be sum closed and $rc$-invariant. Each statement implies the next:
\begin{itemize}
\item[(i)] $|\ind{\CC}_n|$ is bounded;
\item[(ii)] $\gr{\ind{\CC}}$ is $0$ or $1$;
\item[(iii)] $\grrc{\CC}$ exists and $\grrc{\CC} = \gr{\CC}$.
\end{itemize}
\end{thm}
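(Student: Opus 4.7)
The plan is to chain the two implications via Corollary \ref{cor:upgrrcind}, which already reduces everything to establishing the single inequality $\upgrrc{\ind{\CC}} \le \gr{\CC}$.

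For (i) $\Rightarrow$ (ii), I would simply observe that a uniform bound $|\ind{\CC}_n| \le M$ forces $|\ind{\CC}_n|^{1/n} \le M^{1/n} \to 1$, so $\upgr{\ind{\CC}} \le 1$. Either $|\ind{\CC}_n|$ is eventually $0$, in which case $\gr{\ind{\CC}} = 0$, or it is positive for infinitely many $n$, in which case the upper growth rate equals $1$. Either way, the quantitative fact to carry forward is $\upgr{\ind{\CC}} \le 1$.

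For (ii) $\Rightarrow$ (iii), first dispose of the degenerate case $\CC = \{\varepsilon\}$ (trivial on both sides). Otherwise $\CC$ contains $1$, hence by sum closure contains $1 \cdots n$ for every $n$, so $\gr{\CC} \ge 1$. Since $\indinv{\CC}_{2n} \subseteq \ind{\CC}_{2n}$, I have
\[ \upgrrc{\ind{\CC}} \;=\; \limsup_{n} |\indinv{\CC}_{2n}|^{1/n} \;\le\; \limsup_{n} \bigl(|\ind{\CC}_{2n}|^{1/(2n)}\bigr)^{2} \;\le\; \bigl(\upgr{\ind{\CC}}\bigr)^{2} \;\le\; 1 \;\le\; \gr{\CC}, \]
where the penultimate inequality invokes (ii). Corollary \ref{cor:upgrrcind} then delivers simultaneously the existence of $\grrc{\CC}$ and the equality $\grrc{\CC} = \gr{\CC}$, finishing the proof.

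There is no substantive obstacle. The only minor delicacy is that (ii) is slightly ambiguous in the case where the proper growth rate $\gr{\ind{\CC}}$ fails to exist strictly (for instance, if $|\ind{\CC}_n|$ oscillates between $0$ and a positive constant). This is harmless, because the whole chain only uses $\upgr{\ind{\CC}} \le 1$, and this upper bound follows directly from (i) without any need for the proper growth rate to exist.
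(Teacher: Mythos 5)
Your proof is correct and follows essentially the same route as the paper's: both implications are reduced to the single inequality $\upgrrc{\ind{\CC}} \le \gr{\CC}$, handled via the chain $\upgrrc{\ind{\CC}} \le (\upgr{\ind{\CC}})^2 \le 1 \le \gr{\CC}$ and then Corollary \ref{cor:upgrrcind}. The only cosmetic difference is that the paper invokes Proposition \ref{prop:2factor}(a) (stated for classes) directly on $\ind{\CC}$, whereas you unwind it to the underlying containment $\indinv{\CC}_{2n} \subseteq \ind{\CC}_{2n}$ — slightly more scrupulous, since $\ind{\CC}$ is not literally a permutation class, but the content is identical. Your closing remark that only $\upgr{\ind{\CC}} \le 1$ is needed (sidestepping whether $\ind{\CC}$ has a proper growth rate) is a fine alternative to the paper's parenthetical claim that the proper growth rate always exists in this case.
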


\begin{proof}
$\text{(i)} \Rightarrow \text{(ii)}$: If there is $N$ such that $|\CC_n| = 0$ for all $n \ge N$, then $\gr{\ind{\CC}} = 0$. Otherwise, $\gr{\ind{\CC}} = 1$. (It is not hard to show that $\ind{\CC}$ always has a proper growth rate in this case.)

$\text{(ii)} \Rightarrow \text{(iii)}$: By Proposition \ref{prop:2factor}(a) we have $\upgrrc{\ind{\CC}} \le \left(\upgr{\ind{\CC}}\right)^2 \le 1 \le \gr{\CC}$ (assuming $\CC$ is an infinite class); hence $\upgrrc{\ind{\CC}} \le \gr{\CC}$, and (iii) now follows from Corollary \ref{cor:upgrrcind}.
\end{proof}

Statements (i) and (ii) in Theorem \ref{main-sumclosed2} are of interest because they are weaker conditions under which $\upgrrc{\CC} = \gr{\CC}$, but we are also interested in them independently of our inquiry into centrosymmetric permutations. We are in the process of finding the highest threshold of $\gr{\CC}$ (for sum closed $\CC$) below which (i) and (ii) must hold. Theorem \ref{prop:bounded}, in the following subsection, says that these thresholds are at least $\xi$. We discuss this more in Section \ref{sec:thresholds}.

\subsection{Sum closed classes with growth rate $\le \xi$} \label{sec:lessthanxi}

Let $\xi \approx 2.30522$ be the unique positive root of $x^5-2x^4-x^2-x-1$, as defined by Pantone and Vatter \cite{PanVat}. The following result is implicit in the work of \cite{PanVat}:
\begin{prop}[see {\cite[Sec.\ 5 \& 6]{PanVat}}] \label{prop:bounded}
If $\CC$ is sum closed and $\gr{\CC} \le \xi$, then $|\ind{\CC}_n|$ (for $n \ge 1$) is bounded above by the sequence $(1, 1, 3, 5, 5, 5, 4^\infty)$ or the sequence $(1, 1, 2, 3, 4^{2i}, 5, 4^\infty)$ for some $i \ge 0$. In particular, $|\ind{\CC}_n| \le 5$ for all $n$.
\end{prop}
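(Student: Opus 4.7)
The plan is to extract the claim from the classification of sum closed classes with small growth rate carried out by Pantone and Vatter~\cite{PanVat}. The starting point is the familiar relation $A(x) = 1/(1 - C(x))$ for a sum closed class (equation~\eqref{eqn:ac}), where $C(x) = \sum_{n \ge 1} |\ind{\CC}_n|\, x^n$. Because all coefficients are non-negative, $\gr{\CC}$ is the reciprocal of the smallest positive real root of $1 - C(x) = 0$, so the hypothesis $\gr{\CC} \le \xi$ translates into the single inequality $C(1/\xi) \le 1$ on the indecomposable counts.

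With this setup, I would invoke the classification in Sections~5 and~6 of~\cite{PanVat}. Their Section~5 describes every sum closed class whose growth rate lies strictly below $\xi$, and their Section~6 identifies what happens at $\gr{\CC} = \xi$. For each class appearing in this list I would read off the sequence $(|\ind{\CC}_n|)_{n \ge 1}$, either directly from their structural description of the indecomposables or from the defining recurrence of the generating function they exhibit, and then check that the resulting sequence is dominated entrywise by $(1,1,3,5,5,5,4,4,\ldots)$ or by a sequence of the form $(1,1,2,3,\underbrace{4,\ldots,4}_{2i},5,4,4,\ldots)$ for some $i \ge 0$. The universal cap $|\ind{\CC}_n| \le 5$ then follows immediately by inspection of the two envelopes.

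The main obstacle is bookkeeping rather than mathematics: Pantone--Vatter phrase their classification in terms of which simple permutations, skew-sums, and inflations can occur in the class, whereas the desired statement is purely about the sequence of indecomposable counts. One must translate their structural description into the sequence language, and then --- to show that no other sequence is possible --- verify that bumping any single $|\ind{\CC}_n|$ above the stated envelope forces the smallest positive root of $1 - C(x) = 0$ to drop below $1/\xi$. This last step reduces to a finite family of polynomial root computations of exactly the sort performed throughout~\cite{PanVat}, so it requires no new idea beyond their framework; the work is in organizing the case analysis so that the two claimed families (the isolated exceptional sequence, and the one-parameter family indexed by where the lone $5$ appears) exhaust all possibilities.
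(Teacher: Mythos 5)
Your starting observation --- that for a sum closed class $\gr{\CC}$ is the reciprocal of the smallest positive root of $1-C(x)$, so $\gr{\CC}\le\xi$ is equivalent to $C(1/\xi)\le 1$ --- is correct, and you are also right that the content comes from Pantone--Vatter. But the verification step you propose does not work: you claim that to rule out every sequence not dominated by the two envelopes, it suffices to ``verify that bumping any single $|\ind{\CC}_n|$ above the stated envelope forces the smallest positive root of $1-C(x)=0$ to drop below $1/\xi$,'' i.e.\ that the envelope is detected by the single analytic inequality. This is false. Take $c=(1,1,4,0,0,\dots)$: it exceeds both envelopes at $n=3$ (each requires $c_3\le 3$), yet $C(1/\xi)=1/\xi+1/\xi^2+4/\xi^3\approx 0.95<1$, so it passes the analytic test. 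The reason such a sequence is actually impossible is not analytic at all; it is the combinatorial fact (from \cite[Sec.\ 6]{PanVat}) that any sum closed class with $\gr{\CC}\le\xi$ must have $c_1=c_2=1$ and $c_3\le 3$. The envelope sequences themselves are \emph{not} realizable as $(|\ind{\CC}_n|)$ of a class of growth rate $\le\xi$ --- indeed $(1,1,3,5,5,5,4^\infty)$ gives $C(1/\xi)>1$ --- so one cannot get from ``$C(1/\xi)\le 1$'' to ``dominated by the envelope'' by polynomial root computations alone.

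The paper's proof instead leans on structural lemmas about sum closed classes from \cite{PanVat} that have no analytic analogue. The workhorse, deduced from Propositions 5.1, 5.3, 5.5 there, is: if $c_k$ falls strictly below the $k$th entry of $(1,1,2,3,4^\infty)$, then $(c_n)_{n\ge k}$ is weakly decreasing. Combined with the initial constraints $c_1=c_2=1$, $c_3\le 3$, and (when $(c_1,c_2,c_3)=(1,1,2)$) $c_4\le 5$ from \cite[Sec.\ 6]{PanVat}, and with \cite[Prop.\ 7.1 \& 7.2]{PanVat} to kill the cases $(1,1,2,3,4^i,m)$ with $m\ge 6$ and to force the number of $4$s before the $5$ to be even or $1$, one runs a finite case analysis on the initial segment of $(c_n)$, using the ``weakly decreasing from $k$ on'' fact to close each branch. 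So the case analysis is organized around these combinatorial propositions, not around polynomial inequalities; your plan would need to replace the final root-computation step with these structural inputs to go through.
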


\begin{proof}[Proof sketch of Proposition \ref{prop:bounded}]
Let $c_n = |\ind{\CC}_n|$. Propositions 5.1, 5.3, and 5.5 of \cite{PanVat} imply this fact: if there is $k$ such that $c_k$ is below the $k$th entry of $(1, 1, 2, 3, 4^\infty)$, then $c_n$ is weakly decreasing over all $n \ge k$. Consequently, if $c_n$ is bounded above by one of the two claimed upper-bound sequences for $n < k$, then $c_n$ will continue to be bounded above by that sequence for all $n$.

As shown in \cite[Sec.\ 6]{PanVat}, we must have $c_1 = 1$ and $c_2 = 1$ and $c_3 \le 3$, and if $(c_1, c_2, c_3) = (1, 1, 2)$ then $c_4 \le 5$. Additionally, if $\gr{\CC} \le \xi$ then $c_n$ cannot be bounded below by any of the sequences in Tables 1 and 2 of \cite[Sec.\ 6]{PanVat}.

The proposition can now be proved by checking different cases based on how the sequence $\left(c_n\right)_{n\ge1}$ begins. By the discussion in the first paragraph of the proof, we can break and go to the next case any time the sequence goes below $(1, 1, 2, 3, 4^\infty)$. Along the way, \cite[Prop.\ 7.1]{PanVat} is used to eliminate the case where the sequence begins $(1, 1, 2, 3, 4^i, m)$ for $m \ge 6$, and \cite[Prop.\ 7.2]{PanVat} is used to show that the number of $4$s preceding the $5$ must be even or $1$.
\end{proof}

Proposition \ref{prop:bounded} shows that sum closed classes with growth rate $\le \xi$ satisfy condition (i) in the statement of Theorem \ref{main-sumclosed2}, so we obtain Theorem \ref{main-sumclosed} as an immediate corollary, which we restate here:
\begin{restate2}{main-sumclosed}
Let $\CC$ be a sum closed $rc$-invariant permutation class. If $\gr{\CC} \le \xi$, then $\grrc{\CC}$ exists and $\grrc{\CC} = \gr{\CC}$.
\end{restate2}

From inspecting Tables 3 and 4 in \cite[Sec.\ 8]{PanVat}, we immediately obtain:
\begin{prop} \label{prop:rational}
If $\CC$ is sum closed and $\gr{\CC} \le \xi$, then $\CC$ and $\ind{\CC}$ have rational generating functions.
\end{prop}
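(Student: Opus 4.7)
The plan is to show that the sequence $c_n := |\ind{\CC}_n|$ is eventually constant. Once this is established, $C(x) = \sum_{n \ge 1} c_n x^n$ decomposes as a polynomial plus a single geometric tail of the form $c^*x^N/(1-x)$, so $C(x)$ is rational; then the identity $A(x) = 1/(1-C(x))$ from \eqref{eqn:ac} makes $A(x)$ rational as well. This gives rationality of the generating functions of both $\ind{\CC}$ and $\CC$ simultaneously.

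To establish eventual constancy, I would invoke Proposition \ref{prop:bounded}, which pins $c_n$ between $0$ and one of the two explicit sequences $(1,1,3,5,5,5,4^\infty)$ or $(1,1,2,3,4^{2i},5,4^\infty)$, each terminating in an infinite tail of $4$s. I would then combine this with the monotonicity fact from \cite[Sec.\ 5]{PanVat} that powered the proof of Proposition \ref{prop:bounded}: if $c_k$ strictly falls below the $k$th term of $(1,1,2,3,4^\infty)$ for some $k$, then $c_n$ is weakly decreasing for all $n \ge k$. In that case, $c_n$ is a weakly decreasing sequence of non-negative integers, hence eventually constant. Otherwise $c_n \ge 4$ for all $n \ge 5$, while the upper bound from Proposition \ref{prop:bounded} forces $c_n \le 4$ for all sufficiently large $n$; hence $c_n = 4$ eventually.

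The only obstacle is verifying that these two cases exhaust all possibilities and that the monotonicity lemma transitions cleanly from upper bounds to eventual constancy, but nothing deeper seems to be required. As an alternative route, one can proceed entirely by inspection of Tables 3 and 4 in \cite[Sec.\ 8]{PanVat}, which enumerate each sum closed class with $\gr{\CC} \le \xi$ together with an explicit rational expression for its generating function; the rationality of the generating function of $\ind{\CC}$ then follows from \eqref{eqn:ac}.
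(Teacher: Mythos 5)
Your main argument is correct and takes a genuinely different (and arguably cleaner) route than the paper's. The paper establishes rationality by enumeration: for $\gr{\CC} < \xi$ it reads off the finitely many possible eventually-repeating sequences for $|\ind{\CC}_n|$ from Tables 3 and 4 of \cite{PanVat}, and for the boundary case $\gr{\CC} = \xi$ it runs the case analysis from the proof of Proposition~\ref{prop:bounded} to pin down three specific sequences, then concludes rationality of the generating function for $\ind{\CC}$ (hence for $\CC$ via Equation~\eqref{eqn:ac}). You instead bypass the table lookup entirely and argue directly that $c_n = |\ind{\CC}_n|$ is eventually \emph{constant}: either the monotonicity lemma from \cite[Sec.~5]{PanVat} kicks in (weakly decreasing nonnegative integers stabilize), or else $c_n \ge 4$ for $n \ge 5$ while Proposition~\ref{prop:bounded} forces $c_n \le 4$ for large $n$, giving $c_n = 4$ eventually. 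This dichotomy is exhaustive and buys a stronger conclusion (eventual constancy rather than merely eventual periodicity) from ingredients the paper already uses to prove Proposition~\ref{prop:bounded}, so the dependence on \cite{PanVat} is roughly the same. One small caveat about your \emph{alternative} route: as you state it, inspection of Tables 3 and 4 alone does not cover the boundary case $\gr{\CC} = \xi$ --- the paper itself handles that case separately by case-checking --- so that route as written is slightly short; your primary argument, however, does cover $\gr{\CC} = \xi$ because Proposition~\ref{prop:bounded} is stated for $\gr{\CC} \le \xi$.
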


\begin{proof}
That $\ind{\CC}$ has a rational generating function when $\gr{\CC} < \xi$ is immediate from Tables 3 and 4 in \cite[Sec.\ 8]{PanVat}, because these tables list all possible sequences of numbers that $|\ind{\CC}_n|$ can give, and each sequence is eventually repeating (in fact it is shown in \cite[Sec.\ 7]{PanVat} that $|\ind{\CC}_3| \le 3$ implies that a sum closed class $\CC$ has a rational generating function). If $\gr{\CC} = \xi$, then the case checking in the proof of Proposition \ref{prop:bounded} also shows that $|\ind{\CC}_n|$ must be given by one of the following sequences:
\begin{itemize}
\item $(1, 1, 2, 4, 3, 3, 2, 1, 0^\infty)$;
\item $(1, 1, 2, 3, 4^\infty)$;
\item $(1, 1, 2, 3, 4^i, 5, 3, 3, 2, 1, 0^\infty)$;
\end{itemize}
and each of these has a rational generating function because it is eventually repeating.

That $\CC$ itself has a rational generating function is now immediate from Equation \eqref{eqn:ac}.
\end{proof}

%
%
%

\section{Open questions: New thresholds of growth rates} \label{sec:thresholds}

This section consists mostly of examples and conjectures, with the goal of beginning to answer two questions: What is the smallest possible growth rate of a sum closed class whose indecomposables are unbounded? And what is the smallest possible growth rate of a sum closed class whose indecomposables are exponential?

Let $R = \{1\} \cup \{ (12\ldots i) \ominus (12\ldots j) \colon i,j\ge1\}$, and let $\CC = \sumof R$. Every permutation in $R$ is indecomposable, and every indecomposable permutation contained in a permutation in $R$ is itself in $R$; consequently, $\ind{\CC} = R$. It turns out that $\CC = \Av{321, 3142, 2413}$. We have $|\ind{\CC}_n| = n-1$ for $n\ge2$, so
\[ \sum_{n\ge1} |\ind{\CC}_n|\,x^n = \frac{x-x^2+x^3}{(1-x)^2}, \]
and by Equation \eqref{eqn:ac} we obtain
\[ \sum_{n\ge1} |\CC_n|\,x^n = \frac{(1-x)^2}{1-3x+2x^2-x^3}. \]
From the denominator we see that $\gr{\CC}$ is the unique positive root of $x^3-3x^2+2x-1$, which is approximately $2.32472$. Call this number $\tau$.

\begin{conj} \label{conj:tau}
$\tau$ is the smallest possible growth rate of a sum closed class $\CC$ with the property that $|\ind{\CC}_n|$ is unbounded.
\end{conj}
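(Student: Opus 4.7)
The plan is to split the conjecture into an upper bound (achieved by the exhibited example) and a lower bound (the substantive content). The upper bound is handled by the class $\CC = \sumof R$ constructed immediately above the conjecture: its indecomposable counts satisfy $|\ind{\CC}_n| = n-1$ for $n \ge 2$, which is unbounded, and $\gr{\CC} = \tau$, so the infimum in question is at most $\tau$.

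For the lower bound, suppose $\CC$ is sum closed with $c_n := |\ind{\CC}_n|$ unbounded. By the Pantone--Vatter Propositions 5.1, 5.3, and 5.5 (as invoked in the proof sketch of Proposition \ref{prop:bounded}), if $c_k$ ever drops strictly below the $k$th entry of $(1,1,2,3,4^\infty)$, then $(c_n)_{n\ge k}$ is weakly decreasing and hence bounded, contradicting unboundedness. So $c_n$ must weakly dominate $(1,1,2,3,4,4,\ldots)$. The central new step I would carry out is to upgrade this to the stronger bound $c_n \ge n-1$ for all $n \ge 2$. I would attempt this via a case analysis on the initial terms of $(c_n)$ in the style of \cite[\S 6--7]{PanVat}: whenever $(c_n)$ exhibits a plateau $c_m = c_{m+1}$ (so that its growth slips below $n-1$), identify a structural obstruction on the indecomposable antichain that eventually forces the sequence below the $(1,1,2,3,4^\infty)$ template, yielding a contradiction via the dichotomy above.

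Granted the bound $c_n \ge n-1$, the conclusion follows by a standard generating-function comparison. Let $C(x) = \sum_n c_n x^n$ and $C_0(x) = (x - x^2 + x^3)/(1-x)^2$, the indecomposable series of the example class. If $\gr{\ind{\CC}} > \tau$, then $\gr{\CC} \ge \gr{\ind{\CC}} > \tau$ is immediate. Otherwise $C$ is defined and continuous on $[0, 1/\tau)$, and the coefficient-wise dominance $C \ge C_0$ gives $\limsup_{x \to 1/\tau^-} C(x) \ge C_0(1/\tau) = 1$; hence the smallest positive real root of $1 - C(x) = 0$ occurs at or before $1/\tau$. By \eqref{eqn:ac} this root is the radius of convergence of $A(x) = \sum_n |\CC_n|\,x^n$, so $\gr{\CC} \ge \tau$.

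The main obstacle is the structural lemma $c_n \ge n - 1$. The Pantone--Vatter classification stops at $\gr{\CC} = \xi$, and while their methods supply much of the required toolkit, showing that every unbounded sequence of indecomposable counts in a sum closed class dominates $n-1$ is likely to require identifying new forbidden families of antichains of indecomposable permutations, analogous to those catalogued in \cite[Tables 1--2]{PanVat}. A plausible intermediate target, should the full lemma prove out of reach, is to show merely that any sum closed class with unbounded indecomposables satisfies $\gr{\CC} > \xi$, and then to sharpen the threshold upward by cataloguing the sum closed classes with $\xi < \gr{\CC} < \tau$ and verifying that each of them has bounded $c_n$.
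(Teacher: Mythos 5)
This statement is labeled a \emph{conjecture} in the paper, and the paper supplies no proof of it. What the paper does establish is only the upper bound: the explicit class $\CC = \Av{321,3142,2413} = \bigoplus R$ has $|\ind{\CC}_n| = n-1$ for $n \ge 2$ (unbounded) and $\gr{\CC} = \tau$, so $\tau$ is \emph{achievable}. The paper explicitly says the lower bound is open, locating the true threshold only in the interval $[\xi,\tau]$, and observes that Conjecture~\ref{conj:tau} would follow from the stronger Conjecture~\ref{conj:tau2} (that $|\ind{\CC}_n|$ unbounded forces $|\ind{\CC}_n| \ge n-1$ for all $n$).

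Your proposal does not constitute a proof. The step you call ``the central new step I would carry out'' --- upgrading the Pantone--Vatter floor $(1,1,2,3,4^\infty)$ to the stronger floor $c_n \ge n-1$ --- is precisely the paper's Conjecture~\ref{conj:tau2}, and you never prove it; you gesture at a case analysis ``in the style of \cite[\S 6--7]{PanVat}'' and then concede it is ``likely to require identifying new forbidden families of antichains.'' That is an accurate assessment of the difficulty, but it means the argument is a reduction to an open problem, not a proof. To your credit: the reduction itself is correct. Given $c_n \ge n-1$, your generating-function comparison is sound --- $C(x) \ge C_0(x)$ coefficientwise with $C_0(1/\tau) = 1$ (since $1/\tau$ is the smallest positive zero of $1-3x+2x^2-x^3$), and then by Equation~\eqref{eqn:ac} the radius of convergence of $A$ is at most $1/\tau$, giving $\gr{\CC} \ge \tau$; the side case $\gr{\ind{\CC}} > \tau$ is handled trivially. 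This exactly mirrors the paper's remark that Conjecture~\ref{conj:tau} follows from Conjecture~\ref{conj:tau2}. Also note that the weak lower bound you already have from Pantone--Vatter (the sequence cannot drop below $(1,1,2,3,4^\infty)$) only yields $\gr{\CC} > \xi$, not $\gr{\CC} \ge \tau$; the gap between $\xi \approx 2.30522$ and $\tau \approx 2.32472$, though numerically small, is precisely where the open difficulty lives, and the fallback you suggest (cataloguing sum closed classes with $\xi < \gr{\CC} < \tau$) is another open problem, not a proof strategy.
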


Our example of $\CC = \Av{321, 3142, 2413}$ shows that $\tau$ is a possible growth rate, so the content of Conjecture \ref{conj:tau} is that no smaller growth rate is possible. In the other direction, we know from Proposition \ref{prop:bounded} that all such growth rates are greater than $\xi$. Thus, we know that the smallest threshold above which $|\ind{\CC}_n|$ can be unbounded is somewhere in the interval $[\xi, \tau]$, whose width is about $0.02$.

Conjecture \ref{conj:tau} would follow from this stronger conjecture:

\begin{conj} \label{conj:tau2}
If $\CC$ is sum closed and $|\ind{\CC}_n|$ is unbounded, then $|\ind{\CC}_n| \ge n-1$ for all $n$.
\end{conj}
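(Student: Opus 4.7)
The strategy is to prove the contrapositive: if $c_n := |\ind{\CC}_n|$ satisfies $c_N \le N-2$ for some $N$, then $c_n$ is bounded. The small cases $N \le 5$ are handled directly by the Pantone--Vatter ``weak-decrease'' principle invoked in the proof sketch of Proposition~\ref{prop:bounded}: the condition $c_3 \le 1$, $c_4 \le 2$, or $c_5 \le 3$ places $(c_n)$ strictly below the threshold sequence $(1,1,2,3,4,4,\ldots)$ at position $N$, so Propositions 5.1, 5.3, and 5.5 of \cite{PanVat} force $c_m$ to be weakly decreasing for $m \ge N$, hence bounded. The cases $N=1,2$ are trivial: $c_1 = 1$ always, and $c_2 \ge 1$ whenever $\CC$ contains any non-monotone permutation (otherwise $c_n \le 1$ for all $n$, which is bounded).

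The substantive content is the range $N \ge 6$, where the Pantone--Vatter dichotomy only forces $c_n \ge 4$ eventually in the unbounded case, so a new argument is needed to bridge the gap from $4$ up to $n-1$. I plan a structural bootstrap. The extremal example $\CC = \Av{321, 3142, 2413}$ preceding Conjecture~\ref{conj:tau} points to the specific family to look for: the $n-1$ skew sums $(12\ldots i) \ominus (12\ldots(n-i))$, $1 \le i \le n-1$. The hope is to prove a key lemma saying that if $\CC$ is sum closed and $c_n$ is unbounded, then for every $n$, $\CC$ contains either this staircase family in size $n$, or one of its symmetric variants under reverse, complement, or $rc$; all such families have $n-1$ distinct indecomposable members of each size, contradicting $c_N \le N-2$. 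To extract the staircase (or a symmetric variant), one would pick an indecomposable $\pi \in \CC$ of very large size and, using Erdős--Szekeres on suitable sub-patterns of $\pi$ together with indecomposability of $\pi$ and downward closure of $\CC$, produce all intermediate sizes $(12\ldots i) \ominus (12\ldots(n-i))$ as patterns of $\pi$; iterating over $i$ then delivers the claimed $n-1$ indecomposables.

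The main obstacle is the key lemma, which is the genuinely new ingredient and not directly implied by the existing Pantone--Vatter machinery. A brute-force extension of the case analysis in \cite[Sec.\ 5--7]{PanVat}---which dichotomizes sum closed classes by the early values of $(c_n)$ and then extracts detailed information about $\ind{\CC}$ in each case---is likely to succeed but may be long and delicate, because one would need to refine that analysis to distinguish the ``bounded versus $\ge n-1$'' dichotomy rather than the coarser ``bounded versus $\ge 4$'' dichotomy already present. A more conceptual alternative, which I would try first, is to apply Higman's Lemma to a restricted family of staircase-style sub-patterns inside $\ind{\CC}$: unboundedness should furnish an infinite chain of indecomposables sharing a common monotone skeleton, and sum-closure together with downward closure should then manufacture the missing intermediate-size indecomposables for free. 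Either way, reducing the general sum closed setting to a small, explicitly classifiable list of ``minimal unbounded'' structural types is the crux, and I expect this is where most of the work---and possibly a more restrictive hypothesis, such as finite basis---will be required.
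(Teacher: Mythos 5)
The statement you have been asked to prove is Conjecture~\ref{conj:tau2} in the paper, and the paper supplies no proof of it: it is explicitly posed as an open conjecture, flanked only by the single motivating example $\CC = \Av{321,3142,2413}$. So there is no ``paper proof'' to compare against, and the relevant question is simply whether your proposal, on its own terms, constitutes a proof. It does not, and you yourself flag this by calling the central step a ``key lemma'' that remains unestablished. A plan of attack is not a proof, and everything after the easy reduction to $N \ge 6$ is speculative.

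Beyond the incompleteness, there is a concrete flaw in the key lemma as you state it. You propose that unboundedness of $|\ind{\CC}_n|$ forces $\CC$ to contain the staircase family $(12\ldots i)\ominus(12\ldots(n-i))$ for $1\le i\le n-1$, or a symmetric variant. First, the reverse and complement of a skew sum are direct sums, so those ``symmetric variants'' are decomposable and contribute nothing to $|\ind{\CC}_n|$; and the inverse or $rc$ of a staircase is again a staircase, so the family is essentially unique. Second, and more seriously, the paper's own second example in Section~\ref{sec:thresholds} refutes the lemma: $\CC = \Av{312,4321,3421}$ is sum closed with $\ind{\CC} = \{\sigma\ominus 1 : \sigma\in\bigoplus\{1,21\}\}$, which grows exponentially and hence is unbounded, yet $12\ominus 12 = 3412$ contains $312$ and so is not in $\CC$. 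Thus this class omits the staircase family at size $4$ (and, likewise, at every size $n\ge 4$ with $2\le i\le n-2$), even though the conjectured inequality $|\ind{\CC}_n|\ge n-1$ does hold for it, via a Fibonacci count rather than a staircase. Any correct proof must therefore argue directly about the cardinality $|\ind{\CC}_n|$ rather than about containment of a single canonical family; the Erd\H{o}s--Szekeres and Higman's Lemma suggestions, as sketched, do not bridge this gap, since a sub-pattern of an indecomposable permutation need not itself be indecomposable, and downward closure alone gives no control on how many of the extracted sub-patterns remain indecomposable.
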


Conjecture \ref{conj:tau2} would mean that, for each $n$, the smallest possible value of $|\ind{\CC}_n|$ is achieved by $\Avn{n}{321, 3142, 2413}$.

Now let $S = \{ \sigma \ominus 1 \colon \sigma \in \sumof \{1, 21\} \}$, and let $\CC = \sumof S$. Every permutation in $S$ is indecomposable, and every indecomposable permutation contained in a permutation in $S$ is itself in $S$; consequently, $\ind{\CC} = S$. It turns out that $\CC = \Av{312,4321,3421}$. Since $\{1, 21\}$ has generating function $x+x^2$, we find that $\sumof \{1,21\}$ has generating function $\frac{1}{1-x-x^2}$, and so $S = \ind{\CC}$ has generating function $\frac{x}{1-x-x^2}$, and by Equation \eqref{eqn:ac} we obtain
\[ \sum_{n\ge1} |\CC_n|\,x^n = \frac{1-x-x^2}{1-2x-x^2}. \]
From the denominator we see that $\gr{\CC}$ is the unique positive root of $x^2-2x-1$, which is $1+\sqrt{2} \approx 2.41421$.

In this example, we have $\gr{\ind{\CC}} = \phi = \smallfrac{1+\sqrt{5}}{2} \approx 1.61803$, which is greater than $1$. Is $1+\sqrt{2}$ the smallest growth rate of a class $\CC$ for which $\upgr{\ind{\CC}} > 1$? In the other direction, we know from Theorem \ref{main-sumclosed2} that $\gr{\CC} > \xi$ for all such classes. Thus, we know that the smallest threshold above which $\upgr{\ind{\CC}}$ can be greater than $1$  is somewhere in the interval $[\xi, 1+\sqrt{2}]$, whose width is about $0.11$.

There are no classes with upper or lower growth rate strictly between $1$ and $\phi$ \cite{KK}, but is there sum closed $\CC$ with $\upgr{\ind{\CC}}$ strictly between $1$ and $\phi$?

\subsection*{Acknowledgements}

I thank the two reviewers for helping to improve this paper, especially Reviewer 2 who provided background information on the motivation for our main question. I thank Jay Pantone for several helpful discussions about this paper, as well as for showing me the ``trick'' used to prove Theorem \ref{thm:monotone}. I also thank Michael Albert, David Bevan, and Robert Brignall (and a few others attending \textit{Permutation Patterns} 2018) for pointing out a very false ``theorem'' that was in an earlier draft of this paper. Finally, I thank Alexander Woo for presenting the open problem at \textit{Permutation Patterns} 2016 that led to this paper.

\end{document}